\documentclass{article}


\usepackage{amsmath,amsthm,amssymb}
\usepackage{mathrsfs,bm}
\usepackage{mathtools, enumitem}

\newcommand{\jbk}[1]{\left\langle {#1} \right\rangle}

\newcommand{\rmop}[1]{\mathop{\mathrm{#1}}}

\newcommand{\df}{\mathrm{d}}

\newcommand{\lap}{\Delta}

\newcommand{\nv}{\nu}

\newcommand{\Ker}{\rmop{Ker}}
\newcommand{\Ran}{\rmop{Ran}}

\newcommand{\p}{\partial}


\newcommand{\Rbb}{\mathbb{R}}

\newcommand{\Hscr}{\mathscr{H}}

\newcommand{\Acal}{\mathcal{A}}

\newcommand{\Dcal}{\mathcal{D}}

\newcommand{\Gcal}{\mathcal{G}}
\newcommand{\Hcal}{\mathcal{H}}

\newcommand{\Jcal}{\mathcal{J}}
\newcommand{\Kcal}{\mathcal{K}}

\newcommand{\Scal}{\mathcal{S}}

\newcommand{\Xcal}{\mathcal{X}}

\newcommand{\Gvf}{\varphi}

\newcommand{\Go}{\omega}

\newcommand{\GG}{\Gamma}

\newcommand{\GO}{\Omega}

\usepackage{expl3}
\ExplSyntaxOn
\cs_new_eq:NN \Repeat \prg_replicate:nn
\ExplSyntaxOff

\newtheorem{prop}{Proposition}[section]
\newtheorem{theo}[prop]{Theorem}

\newtheorem{lemm}[prop]{Lemma}

\theoremstyle{definition}

\newtheorem*{note*}{Note}
\newtheorem*{claim*}{Claim}
\newtheorem*{exam*}{Example}
\newtheorem*{rema*}{Remark}
\newtheorem*{exer*}{Exercise}
\newtheorem*{prob*}{Problem}

\numberwithin{equation}{section}

\usepackage{hyperref}
\hypersetup{%
colorlinks=true,
linkcolor=blue,
urlcolor=black,
}

\mathtoolsset{showonlyrefs=false}

\newcommand{\SL}{\Scal}
\newcommand{\DL}{\Dcal}
\newcommand{\NP}{\Kcal}

\newcommand{\Hsb}{\Hcal}

\newcommand{\Hdf}{\Hsb_\mathrm{div}}

\newcommand{\Hdrf}{\Hsb_\mathrm{div, rot}}

\newcommand{\divf}{\mathrm{div}}

\newcommand{\Mpm}{\nabla \Hcal}


\title{Decompositions of surface vector fields and topological characterizations of the codimensions\thanks{\footnotesize This work was supported by NRF (of S. Korea) grant 2022R1A2B5B01001445.}}

\author{Shota Fukushima\thanks{Department of Mathematics and Institute of Applied Mathematics, Inha University, Incheon 22212, S. Korea. Email: \texttt{shota.fukushima.math@gmail.com, hbkang@inha.ac.kr}.} \thanks{Corresponding author.}
\and Hyeonbae Kang\footnotemark[2]}

\sloppy

\begin{document}
\maketitle

\begin{abstract}
We prove that the space of vector fields on the boundary of a bounded domain in three dimensions is decomposed into three subspaces orthogonal to each other: elements of the first one extend to the inside of the domain as gradient fields of harmonic functions, the second one to the outside of the domain as gradient fields of harmonic functions, and the third one to both the inside and the outside as divergence-free harmonic vector fields. We also characterize by the first Betti numbers the codimensions of inclusions of various subspaces related to the decomposition.
\end{abstract}

\noindent{\footnotesize \textit{MSC2020:} Primary 35J25; Secondary 31B10}

\noindent{\footnotesize \textit{Key words:} surface vector field, Helmholtz decomposition, Betti number, layer potential}


\section{Introduction}

In this paper, we deal with decompositions of vector fields defined on the boundary $\p\GO$ of a bounded domain $\GO$ in $\Rbb^3$. We prove that the space of vector fields on $\p\GO$ is decomposed into three orthogonal subspaces: elements of the first one extend to $\GO$ as gradient fields of harmonic functions, the second one to $\Rbb^3 \setminus \overline{\GO}$ as gradient fields of harmonic functions (with a proper decay at $\infty$), and the third one to both $\GO$ and $\Rbb^3 \setminus \overline{\GO}$ as divergence-free harmonic vector fields. Another decomposition is obtained in \cite{FJKc1a}: the space of vector fields on $\p\GO$ is decomposed into three orthogonal subspaces: elements of the first one extend to $\GO$ as divergence-free, rotation-free vector fields, the second one to $\Rbb^3 \setminus \overline{\GO}$ as divergence-free, rotation-free vector fields, and the third one to both $\GO$ and $\Rbb^3 \setminus \overline{\GO}$ as divergence-free harmonic vector fields which can be represented in terms of the double layer potential type operator (see \eqref{divdouble}). These two decompositions are the same if $\p\GO$ is simply connected, and they are different if $\p\GO$ is not simply connected. This will be proved by characterizing the codimensions of the inclusion relations among various subspaces involved in decompositions in terms of the first Betti numbers of $\GO$, $\Rbb^3 \setminus \overline{\GO}$, and $\p\GO$.

Let us now present results in a more precise manner.
We begin with introducing the inner product which is used in describing the orthogonality of the decompositions.

Let $\Omega\subset \Rbb^3$ be a bounded domain with the Lipschitz boundary $\p\GO$. Let $\GG(x)$ be the fundamental solution to the Laplace operator, namely, $\GG(x)=- (4\pi |x|)^{-1}$, and let $\SL$ be the single layer potential for the Laplace operator, namely,
\begin{equation}\label{eq_sl}
    {\SL}[f](x)=\int_{\partial\Omega}\GG(x-y)f(y)\, \df \sigma (y).
\end{equation}
Note that ${\SL}[f](x)$ is defined for $x \in \Rbb^3$. If we confine $x$ on $\p\GO$, then $\SL$ is an invertible operator from $H^{-1/2}(\partial\Omega)$ onto $H^{1/2}(\partial\Omega)$  \cite{Verchota84}. Here and throughout this paper, $H^s (X, \Rbb^N)$ stands for the $L^2$-Sobolev space of order $s$ of $\Rbb^N$-valued functions on an appropriate space $X$, and we use the shorthanded notation $H^s (X, \Rbb)=H^s(X)$. The definition of ${\SL}[f]$ for $\Rbb^3$-valued function is obvious, and $\SL$ is invertible from $H^{-1/2}(\partial\Omega, \Rbb^3)$ onto $H^{1/2}(\partial\Omega, \Rbb^3)$. Denoting its inverse by $\Scal^{-1}$, the bilinear form
\begin{equation}\label{eq_inner_product}
    \jbk{f, g}_*:=-\jbk{\SL^{-1}[f], g}_{\partial\Omega}
\end{equation}
defines an inner product on $H^{1/2}(\partial\Omega, \Rbb^3)$ whose associated norm is equivalent to the Sobolev norm. Here $\jbk{\cdot, \cdot}_{\partial\Omega}$ stands for the dual pairing of $H^{-1/2}(\partial\Omega, \Rbb^3)$ and $H^{1/2}(\partial\Omega, \Rbb^3)$. This inner product was introduced in \cite{KPS} for symmetrization of the Neumann-Poincar\'e operator. 

The inner product \eqref{eq_inner_product} is closely related to interior and exterior boundary value problems of the Laplace equations.
For $f\in H^{1/2}(\partial\Omega, \Rbb^3)$, let $v_-^f\in H^1(\Omega, \Rbb^3)$ be the unique solution to the problem
\begin{equation}
    \label{eq_bvp_interior}
    \begin{cases}
        \lap v_-^f=0 & \text{in } \Omega, \\
        v_-^f=f & \text{on } \partial\Omega.
    \end{cases}
\end{equation}
For the exterior problem, it is convenient to use notation $\GO^+:= \Rbb^3\setminus \overline{\Omega}$. $\Omega^-$ denotes $\Omega$. Let $H^s_{-1}(\GO^+, \Rbb^3)$ be the weighted Sobolev space
\begin{equation}\label{eq_weighted_Sobolev}
    H^s_{-1}(\GO^+, \Rbb^3):=\left\{ u\in L_\mathrm{loc}^1 (\GO^+, \Rbb^3)\,\middle|\,
    \begin{aligned}
        &(1+|x|)^{-1}|\nabla^j u|\in L^2 (\GO^+, \Rbb^3) \\
        &\text{for all } 0\leq j\leq s
    \end{aligned}
    \right\}
\end{equation}
for a nonnegative integer $s$. Let $v_+^f\in H_{-1}^1 (\GO^+, \Rbb^3)$ be the unique solution to the exterior problem
\begin{equation}
    \label{eq_bvp_exterior}
    \begin{cases}
        \lap v_+^f=0 & \text{in } \GO^+, \\
        v_+^f=f & \text{on } \partial\Omega.
    \end{cases}
\end{equation}
We see that
\begin{equation}\label{SS^-}
v_\pm^f(x) = \Scal [\Scal^{-1}[f]](x), \quad x \in \GO^\pm.
\end{equation}
It then follows from the divergence theorem and the jump formula enjoyed by $\Scal$ that the following relation holds:
\[
    \jbk{f, g}_*=\jbk{ \nabla v_-^f, \nabla v_-^g}_{L^2 (\Omega^-, \Rbb^{3\times 3})}+\jbk{ \nabla v_+^f, \nabla v_+^g}_{L^2 (\Omega^+, \Rbb^{3\times 3})}.
\]
The jump formula is
$$
\p_\nu \Scal[\Gvf]|_+ - \p_\nu \Scal[\Gvf]|_- =\Gvf \quad\text{on } \p\GO.
$$
Here and throughout this paper, $\nu$ denotes the normal vector field on $\p\GO$ outward to $\GO$, $\p_\nu$ the normal derivative, and $u|_\pm$ the limit from $\GO^\pm$ to $\p\GO$.

We now review the results of \cite{FJKc1a} which motivate current work and are complementary to its results. For that purpose, we introduce the following subspaces of $H^{1/2}(\partial\Omega, \Rbb^3)$:
\begin{align*}
    \Hdf^\pm &:=\{ f\in H^{1/2}(\partial\Omega, \Rbb^3) \mid \nabla \cdot v_\pm^f=0 \text{ in } \Omega^\pm \}, \\
    \Hdf &:= \Hdf^- \cap \Hdf^+, \\
    \Hdrf^\pm &:=\{ f\in \Hdf^\pm \mid \nabla \times v_\pm^f=0 \text{ in } \Omega^\pm \}.
\end{align*}
One can easily see that elements of $\Hdf$ extend to both $\GO^-$ and $\GO^+$ as divergence-free harmonic vector fields; those of $\Hdrf^-$ and $\Hdrf^+$ respectively extend to $\GO^-$ and $\GO^+$ as divergence-free, rotation-free vector fields.

The following operator of the double layer potential type is introduced in \cite{FJKc1a}:
\begin{align}
    \DL^\divf [f](x):=&
    \int_{\partial\Omega} \nabla_y \GG(x-y)(\nv_y\cdot f(y))- \nv_y (\nabla_y \GG(x-y)\cdot f(y)) \, \df \sigma (y) \nonumber \\
    & + \int_{\partial\Omega} (\nabla_y \GG(x-y) \cdot \nv_y) f(y)\, \df \sigma (y), \quad x\in \Rbb^3\setminus \partial\Omega, \label{divdouble}
\end{align}
for $f\in H^{1/2}(\partial\Omega, \Rbb^3)$. We mention that the second operator on the right hand side of \eqref{divdouble} is the standard double layer potential for the Laplacian. It is shown that
\begin{equation}\label{div-free-double}
\lap \DL^\divf [f]=0, \quad \nabla\cdot \DL^\divf [f]=0 \quad \text{in } \Rbb^3\setminus \partial \Omega.
\end{equation}
Because of these properties, $\DL^\divf$ is called the divergence-free double layer potential. Let $\NP^\divf$ be the corresponding Neumann-Poincar\'e operator, that is,
\begin{align*}
    \NP^\divf [f](x)= &
    \rmop{p.v.} \int_{\p\GO} \nabla_y \GG(x-y)(\nv_y\cdot f(y))- \nv_y (\nabla_y \GG(x-y)\cdot f(y)) \, \df \sigma (y) \nonumber \\
    & + \rmop{p.v.} \int_{\p\GO} (\nabla_y \GG(x-y) \cdot \nv_y) f(y)\, \df \sigma (y), \quad x\in \p\GO,
\end{align*}
where $\rmop{p.v.}$ stands for the Cauchy principal value.
Then the following jump relation holds \cite[Proposition 2.4]{FJKc1a}:
\[
\DL^\divf [f]|_{\pm}(x)= \left(\NP^\divf \mp \frac{1}{2}I \right)[f](x), \quad x\in \p\GO.
\]
It is proved in \cite[Theorem 2.9]{FJKc1a} that
\[
\Hdrf^\pm = \Ker \left( \NP^\divf \pm \frac{1}{2}I \right).
\]
Since $\NP^\divf$ is self-adjoint on $H^{1/2}(\p\GO, \Rbb^3)$ equipped with the inner product \eqref{eq_inner_product},
the Hilbert space $H^{1/2}(\partial\Omega, \Rbb^3)$ admits the following orthogonal decomposition:
\begin{equation}
            \label{eq_vf_decomposition}
            H^{1/2}(\partial\Omega, \Rbb^3)=\Hdrf^-\oplus \Hdrf^+ \oplus \overline{\Hdf^\Dcal},
        \end{equation}
where
\begin{equation}
    \label{eq_X}
    \Hdf^\Dcal := \Ran \left( \NP^\divf + \frac{1}{2}I \right)\left( \NP^\divf - \frac{1}{2}I \right).
\end{equation}
Furthermore, if $\partial\Omega$ is $C^{1, \alpha}$ for some $\alpha>1/2$, then $\Hdf^\Dcal$ is proved to be a closed subspace \cite[Theorem 5.4]{FJKc1a} and we obtain the following orthogonal decomposition:
\begin{equation}
    \label{eq_vf_decomposition2}
    H^{1/2}(\partial\Omega, \Rbb^3)=\Hdrf^-\oplus \Hdrf^+ \oplus \Hdf^\Dcal.
\end{equation}

Note that if $f \in \Hdf^\Dcal$, then there exists $f_0 \in H^{1/2}(\partial\Omega, \Rbb^3)$ such that
\[
f=\left.\DL^\divf \left( \NP^\divf - \frac{1}{2}I \right)[f_0]\right|_-=\left.\DL^\divf \left( \NP^\divf + \frac{1}{2}I \right)[f_0]\right|_+.
\]
In view of \eqref{div-free-double}, these formulas imply that $f \in \Hdf$. Thus, we have
\begin{equation}\label{eq:subset}
\Hdf^\Dcal \subset \Hdf.
\end{equation}
An upper bound of the codimension of this inclusion relation is obtained using an argument of de Rham cohomology group \cite[Theorem 1.2]{FJKc1a}: If $\partial\Omega$ is $C^{1, \alpha}$ for some $\alpha>1/2$, then
        \begin{equation}
            \label{eq_vf_betti}
            \dim (\Hdf / \Hdf^\Dcal)\leq b_1 (\partial\Omega)
        \end{equation}
        where $b_1 (\partial\Omega)$ is the first Betti number of $\p\GO$. In particular, if $\partial\Omega$ is simply connected, namely, $b_1 (\partial\Omega)=0$, then
        \begin{equation}
            \label{Dcal=}
            \Hdf^\Dcal = \Hdf,
        \end{equation}
and the following orthogonal decomposition with respect to the inner product \eqref{eq_inner_product} holds:
\begin{equation}
            \label{eq_vf_decomposition3}
            H^{1/2}(\partial\Omega, \Rbb^3)=\Hdrf^-\oplus \Hdrf^+ \oplus \Hdf.
        \end{equation}

The purpose of this work is to investigate further the relation \eqref{eq:subset} and the inequality \eqref{eq_vf_betti}. We aim to characterize the orthogonal complement $\Hdf^\perp$ of $\Hdf$ and to prove the equality in \eqref{eq_vf_betti}. These two questions are closely related through the following relation which holds by \eqref{eq_vf_decomposition2}:
\[
\dim (\Hdf / \Hdf^\Dcal)= \dim ((\Hdf^\Dcal)^\perp/\Hdf^\perp) = \dim ((\Hdrf^-+\Hdrf^+)/\Hdf^\perp).
\]

Since $\Hdf = \Hdf^- \cap \Hdf^+$ and $\Hdf^\perp$ are closed, we have
    \begin{equation}\label{Hdfperp}
        \Hdf^\perp =\overline{(\Hdf^-)^\perp +(\Hdf^+)^\perp}.
    \end{equation}
So we seek to characterize the subspaces $(\Hdf^\pm)^\perp$. It turns out that they are characterized by the following subspaces:
\begin{align}
    \Mpm^-&:=\{ (\nabla u)|_- \mid u \in H^2 (\Omega^-) \text{ is harmonic in } \GO^- \}, \label{eq_mm_defi} \\
    \Mpm^+&:=\{ (\nabla u)|_+ \mid u \in H^2_{-1}(\GO^+) \text{ is harmonic in } \GO^+ \} . \label{eq_mp_defi}
\end{align}
We emphasize that $\Mpm^\pm$ is the subspace of $\Hdrf^\pm$ consisting of vector fields which can be extended as gradients of harmonic functions.

The following theorem is the crux of this paper.

\begin{theo}\label{1001}
If $\Omega\subset \Rbb^3$ is a bounded $C^{1, \alpha}$-domain for some $\alpha>1/2$, then it holds that
\begin{equation}\label{1000}
(\Hdf^\pm)^\perp=\Mpm^\mp.
\end{equation}
\end{theo}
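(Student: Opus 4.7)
The plan is to prove $(\Hdf^-)^\perp=\Mpm^+$; the symmetric identity $(\Hdf^+)^\perp=\Mpm^-$ follows by interchanging $\GO^-$ and $\GO^+$. I would first reduce the problem to within $\Hdrf^+$: since $\Hdrf^-\subset\Hdf^-$ and $\overline{\Hdf^\Dcal}\subset\Hdf\subset\Hdf^-$ by \eqref{eq:subset} (with $\Hdf$ closed as the intersection of the closed subspaces $\Hdf^\pm$), any $f\in(\Hdf^-)^\perp$ is orthogonal to both $\Hdrf^-$ and $\overline{\Hdf^\Dcal}$; the decomposition \eqref{eq_vf_decomposition} then forces $f\in\Hdrf^+$.

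For the inclusion $\Mpm^+\subset(\Hdf^-)^\perp$, given $g=(\nabla u)|_+$ and $f\in\Hdf^-$ with $\phi:=\SL^{-1}[f]$, I would decompose $\phi=\phi_T+\phi_\nv\nv$ and $(\nabla u)|_+=\nabla_T u+(\p_\nv u|_+)\nv$ and integrate by parts on the closed surface $\p\GO$ to obtain
\[
\jbk{f,g}_*=\int_{\p\GO}(\nabla_T\cdot\phi_T)\,u|_+\,\df\sigma-\int_{\p\GO}\phi_\nv\,\p_\nv u|_+\,\df\sigma.
\]
The identity $\nabla\cdot\SL[\phi]=\SL[\nabla_T\cdot\phi_T]-\DL[\phi_\nv]$ in $\Rbb^3\setminus\p\GO$, derived by moving $\nabla_x\GG(x-y)=-\nabla_y\GG(x-y)$ off the kernel and integrating by parts in $y$, combined with $f\in\Hdf^-$, forces $\SL[\nabla_T\cdot\phi_T]=(\tfrac12 I+\NP)[\phi_\nv]$ on $\p\GO$, where $\NP$ is the classical scalar Neumann--Poincar\'e operator. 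On the other hand, uniqueness of the exterior Dirichlet problem with decay gives $u=\SL[\SL^{-1}[u|_+]]$ in $\GO^+$, so the $\p_\nv\SL$ jump yields $\p_\nv u|_+=(\tfrac12 I+\NP^*)[\SL^{-1}[u|_+]]$. Substituting both identities and using the self-adjointness of $\SL$ together with the $L^2$-duality of $(\NP,\NP^*)$ makes the two boundary integrals cancel, so $\jbk{f,g}_*=0$.

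For the reverse inclusion $(\Hdf^-)^\perp\subset\Mpm^+$, I would use Biot--Savart. By the reduction, $v_+^g$ is harmonic, divergence-free, and rotation-free in $\GO^+$; it suffices to show $\oint_\gamma v_+^g\cdot\df\ell=0$ for every smooth closed loop $\gamma\subset\GO^+$, since path-independence then yields $v_+^g=\nabla u$ for a harmonic $u$, and the $O(|x|^{-2})$ decay of divergence-free harmonic exterior fields (the $1/|x|$ monopole being forbidden by $\nabla\cdot v_+^g=0$) gives $u=O(|x|^{-1})\in H^2_{-1}(\GO^+)$. For a fixed $\gamma$, let
\[
B_\gamma(x)=\frac{1}{4\pi}\oint_\gamma\frac{\df\ell(y)\times(x-y)}{|x-y|^3}
\]
be the Biot--Savart field of unit current along $\gamma$; it is divergence-free on $\Rbb^3$, rotation-free on $\Rbb^3\setminus\gamma$, and smooth in a neighborhood of $\overline{\GO^-}$. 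Setting $\phi:=-\nv\times B_\gamma|_{\p\GO}$ (tangential, so $\phi_\nv=0$), the surface identity $\nabla_T\cdot(\nv\times V)=-\nv\cdot(\nabla\times V)$ and $\nabla\times B_\gamma=0$ on $\p\GO$ give $\nabla_T\cdot\phi_T=0$, so the formula for $\nabla\cdot\SL[\phi]$ from the previous paragraph shows $f:=\SL[\phi]|_{\p\GO}\in\Hdf\subset\Hdf^-$. Applying the divergence theorem to $B_\gamma\times v_+^g$ on $(\GO^+\cap B_R)\setminus T_\epsilon$ (with $T_\epsilon$ a tubular $\epsilon$-neighborhood of $\gamma$), using $\nabla\cdot(B_\gamma\times v_+^g)=v_+^g\cdot(\nabla\times B_\gamma)-B_\gamma\cdot(\nabla\times v_+^g)=0$ off $\gamma$, and taking $R\to\infty$ (justified by $B_\gamma=O(|x|^{-3})$ and $v_+^g=O(|x|^{-2})$) and $\epsilon\to 0$ (using $B_\gamma\sim\hat\theta/(2\pi\epsilon)$ near $\gamma$ in a cylindrical frame) yields
\[
\oint_\gamma v_+^g\cdot\df\ell=-\int_{\p\GO}B_\gamma\cdot(g\times\nv)\,\df\sigma.
\]
Since $\jbk{f,g}_*=-\int_{\p\GO}\phi\cdot g\,\df\sigma=\int_{\p\GO}(\nv\times B_\gamma)\cdot g\,\df\sigma=\int_{\p\GO}B_\gamma\cdot(g\times\nv)\,\df\sigma=-\oint_\gamma v_+^g\cdot\df\ell$, the assumption $g\in(\Hdf^-)^\perp$ forces every period to vanish.

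The most delicate point is the $\epsilon\to 0$ limit on $\p T_\epsilon$ in the Biot--Savart step: in a cylindrical frame $(\hat r,\hat\theta,\hat s)$ adapted to $\gamma$, the $1/\epsilon$ singularity of $B_\gamma$ must be matched against the smooth factor $v_+^g|_\gamma$ via $\hat r\times\hat\theta=\hat s$ and the angular mean $\int_0^{2\pi}\df\theta/(2\pi)=1$ to extract precisely $-\oint_\gamma v_+^g\cdot\df\ell$ with the correct sign. A secondary technical point is verifying the surface identity $\nabla_T\cdot(\nv\times V)=-\nv\cdot(\nabla\times V)$ on a $C^{1,\alpha}$ hypersurface, which rests on the normal extension of $\nv$ being curl-free locally and is needed to ensure that the constructed $f$ indeed belongs to $\Hdf^-$.
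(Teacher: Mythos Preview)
Your argument is correct in outline and takes a genuinely different route from the paper. The paper never computes periods or invokes Biot--Savart; instead it introduces operators $\Acal_\pm:H^{1/2}(\p\GO,\Rbb^3)\to H^{-1/2}(\p\GO)$ with $\Acal_\pm[f]=\nabla\cdot\SL[\SL^{-1}[f]]|_\pm$, so that $\Ker\Acal_\pm=\Hdf^\pm$, and then proves by a functional-analytic argument (decomposing along $\Ker(-\tfrac12 I+\NP^*)$ and using invertibility of $-\tfrac12 I+\NP^*$ on the mean-zero subspace) that $\Ran\Acal_\pm^*$ is closed. The identity $(\Hdf^\pm)^\perp=\SL[(\Hdf^\pm)^0]=\SL[\Ran\Acal_\pm^*]$ together with the jump formula $\SL\Acal_\pm^*[\varphi]=-\nabla\SL[\varphi]|_\mp$ then yields \eqref{1000} in one stroke. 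By contrast, your forward inclusion is a direct layer-potential computation (essentially the easy inclusion $\SL[\Ran\Acal_-^*]\subset(\Ker\Acal_-)^\perp$, which requires no closedness), while your reverse inclusion is geometric: you manufacture, for each loop $\gamma\subset\GO^+$, an explicit test element $f_\gamma\in\Hdf^-$ whose $\jbk{\cdot,\cdot}_*$-pairing with $g$ reads off the period of $v_+^g$ around $\gamma$. What the paper's approach buys is brevity and a uniform treatment of both signs without any topology of $\GO^\pm$ entering; what your approach buys is a transparent link between the orthogonal complement and the vanishing of periods, which in effect anticipates the mechanism behind the Betti-number identity \eqref{eq_betti_from_below1} that the paper proves separately in Section~\ref{subs_Betti} via the Helmholtz--Weyl decomposition. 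The two technical points you flag are genuine but manageable: the $\epsilon\to 0$ tube limit is the standard linking-number computation, and the surface identity $\nabla_T\cdot(\nu\times V)=-\nu\cdot(\nabla\times V)$ can in fact be bypassed entirely for your specific $\phi=-\nu\times B_\gamma$ by computing $\nabla\cdot\SL[\phi](x)$ for $x\in\GO^-$ directly via the divergence theorem in $\GO^+\setminus(T_\epsilon\cup B_R^c)$, where the $\p T_\epsilon$ contribution is $\oint_\gamma\nabla_y\GG(x-y)\cdot\df\ell_y=0$ since $\GG(x-\cdot)$ is single-valued on $\gamma$.
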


Since $\Hdrf^- \perp \Hdrf^+$, we have $\Mpm^- \perp \Mpm^+$. It then follows from \eqref{Hdfperp} and Theorem \ref{1001} that
\[
\Hdf^\perp = \Mpm^- \oplus \Mpm^+.
\]
Thus we obtain the following theorem yielding a new decomposition of surface vector fields. There \eqref{eq_vf_potential2} is a consequence of \eqref{Dcal=}.
\begin{theo}\label{theo_vf_potential}
    If $\Omega\subset \Rbb^3$ is a bounded $C^{1, \alpha}$-domain for some $\alpha>1/2$, then we have the decomposition
    \begin{equation}
        \label{eq_vf_potential}
        H^{1/2}(\partial\Omega, \Rbb^3)=\Mpm^- \oplus \Mpm^+\oplus \Hdf
    \end{equation}
    which is orthogonal with respect to the inner product \eqref{eq_inner_product}. Moreover, if $\p\GO$ is simply connected, then we have
    \begin{equation}
        \label{eq_vf_potential2}
        H^{1/2}(\partial\Omega, \Rbb^3)=\Mpm^- \oplus \Mpm^+\oplus \Hdf^\Dcal.
    \end{equation}
\end{theo}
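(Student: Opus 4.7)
The plan is to obtain Theorem \ref{theo_vf_potential} as a short consequence of Theorem \ref{1001}, using the earlier decomposition \eqref{eq_vf_decomposition2}. The strategy is to show that $\Mpm^- + \Mpm^+$ is already a closed subspace of $H^{1/2}(\p\GO, \Rbb^3)$ that coincides with $\Hdf^\perp$, and then to invoke the basic Hilbert-space splitting $H^{1/2}(\p\GO, \Rbb^3) = \Hdf \oplus \Hdf^\perp$.

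First I would note that $\Mpm^\pm \subset \Hdrf^\pm$, since a gradient of a harmonic function is automatically divergence- and rotation-free, as already emphasized after the definitions \eqref{eq_mm_defi}--\eqref{eq_mp_defi}. Because the decomposition \eqref{eq_vf_decomposition2} forces $\Hdrf^- \perp \Hdrf^+$ with respect to $\jbk{\cdot, \cdot}_*$, it follows immediately that $\Mpm^- \perp \Mpm^+$. Moreover, each $\Mpm^\pm$ is closed in $H^{1/2}(\p\GO, \Rbb^3)$, since Theorem \ref{1001} identifies it with the orthogonal complement $(\Hdf^\mp)^\perp$. Consequently the algebraic sum $\Mpm^- + \Mpm^+$ is an orthogonal sum of two closed subspaces, hence itself closed.

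Next, using the general identity \eqref{Hdfperp} together with Theorem \ref{1001}, I would compute
\[
\Hdf^\perp = \overline{(\Hdf^-)^\perp + (\Hdf^+)^\perp} = \overline{\Mpm^+ + \Mpm^-} = \Mpm^- \oplus \Mpm^+,
\]
the final closure being redundant by the previous step. Combining this with $H^{1/2}(\p\GO, \Rbb^3) = \Hdf \oplus \Hdf^\perp$ yields the orthogonal decomposition \eqref{eq_vf_potential}. The second assertion \eqref{eq_vf_potential2} is then immediate from the already established equality \eqref{Dcal=} in the simply connected case, which lets us replace $\Hdf$ by $\Hdf^\Dcal$ in \eqref{eq_vf_potential}.

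There is no substantive obstacle in this argument once Theorem \ref{1001} is in hand; that theorem is the crux and everything else is an abstract Hilbert-space manipulation. The only point worth flagging is the closedness of $\Mpm^\pm$, which is not obvious directly from \eqref{eq_mm_defi}--\eqref{eq_mp_defi} as sets of boundary values of gradients of $H^2$ or $H^2_{-1}$ harmonic functions, but becomes automatic through the orthogonal-complement characterization supplied by Theorem \ref{1001}.
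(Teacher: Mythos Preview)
Your proposal is correct and follows essentially the same route as the paper: the paper also derives Theorem \ref{theo_vf_potential} directly from Theorem \ref{1001} by observing $\Mpm^- \perp \Mpm^+$ (via $\Mpm^\pm \subset \Hdrf^\pm$ and $\Hdrf^- \perp \Hdrf^+$) and then combining \eqref{Hdfperp} with \eqref{1000} to get $\Hdf^\perp = \Mpm^- \oplus \Mpm^+$, with \eqref{eq_vf_potential2} following from \eqref{Dcal=}. You make explicit the closedness of $\Mpm^\pm$ and of their sum, which the paper leaves implicit, but the argument is otherwise identical.
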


As mentioned before, the following inclusion relations among various subspaces involved in the decompositions \eqref{eq_vf_decomposition2} and \eqref{eq_vf_potential} hold:
\begin{equation}\label{inclusionrel}
            \Mpm^\pm \subset \Hdrf^\pm, \quad \Hdf^\Dcal \subset \Hdf.
        \end{equation}
The codimensions of the inclusions are characterized in terms of the first Betti numbers as the following theorem shows.
\begin{theo}
    \label{theo_betti_from_below}
    If $\Omega \subset \Rbb^3$ is a bounded $C^{1, \alpha}$-domain for some $\alpha>1/2$, then we have
    \begin{equation}
        \label{eq_betti_from_below1}
        \dim (\Hdrf^\pm / \Mpm^\pm)= b_1 (\GO^\pm)
    \end{equation}
    and
    \begin{equation}
        \label{eq_betti_from_below}
        \dim (\Hdf / \Hdf^\Dcal)= b_1 (\partial\Omega).
    \end{equation}
    In particular, two decompositions \eqref{eq_vf_decomposition2} and \eqref{eq_vf_potential} are identical if and only if $\partial\Omega$ is simply connected.
\end{theo}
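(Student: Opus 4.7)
The plan is to first prove the codimension identity \eqref{eq_betti_from_below1} for $\Hdrf^\pm/\Mpm^\pm$ by a de Rham cohomology argument, then to derive \eqref{eq_betti_from_below} from it by combining Theorem~\ref{1001} with a Mayer--Vietoris computation on $\Rbb^3 = \GO^- \cup \p\GO \cup \GO^+$, and finally to read off the simply-connected case.

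For the minus sign in \eqref{eq_betti_from_below1} I would introduce the period map
$$
\GF^- \colon \Hdrf^- \longrightarrow H^1_{\mathrm{dR}}(\GO^-), \qquad f \longmapsto [v_-^f],
$$
where $v_-^f$, being harmonic on the open set $\GO^-$, is smooth there and defines a bona fide closed $1$-form. Its kernel is exactly $\Mpm^-$: if $[v_-^f]=0$, a Poincar\'e-lemma primitive $u$ satisfies $\nabla u = v_-^f\in H^1(\GO^-,\Rbb^3)$, whence $u\in H^2_{\mathrm{loc}}(\GO^-)$ is harmonic, and standard up-to-boundary elliptic regularity on a $C^{1,\alpha}$ domain promotes $u$ to $H^2(\GO^-)$. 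Surjectivity is the classical Hodge--Neumann argument: given a smooth closed representative $\omega$ of a class, solve $\lap\phi=\nabla\cdot\omega$ in $\GO^-$ with $\p_\nu\phi=\nu\cdot\omega$ on $\p\GO$ (the compatibility $\int_{\GO^-}\nabla\cdot\omega=\int_{\p\GO}\nu\cdot\omega$ holds by Stokes) and replace $\omega$ by $\omega-\nabla\phi$; the new form is still closed, now also divergence-free, hence harmonic and cohomologous to $\omega$, and its boundary trace is a preimage in $\Hdrf^-$. The exterior version is carried out by the same template inside the weighted framework $H^1_{-1}(\GO^+,\Rbb^3)$: since $\GO^+$ has finite topology, $H^1_{\mathrm{dR}}(\GO^+)$ computed with smooth forms coincides with the one using forms of $|x|^{-1}$ decay, and the Neumann corrector can be chosen through a single-layer representation so as to produce a representative with the required decay.

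With \eqref{eq_betti_from_below1} in hand, Theorem~\ref{1001} together with the orthogonality $\Hdrf^-\perp\Hdrf^+$ gives $\Hdf^\perp = \Mpm^- \oplus \Mpm^+$ as an orthogonal direct sum, so the identity
\[
\dim(\Hdf/\Hdf^\Dcal) = \dim((\Hdrf^-+\Hdrf^+)/\Hdf^\perp)
\]
recorded in the text produces
\[
\dim(\Hdf/\Hdf^\Dcal) = \dim(\Hdrf^-/\Mpm^-) + \dim(\Hdrf^+/\Mpm^+) = b_1(\GO^-) + b_1(\GO^+).
\]
To identify this sum with $b_1(\p\GO)$ I would apply Mayer--Vietoris to the cover $\Rbb^3 = U \cup V$, with $U,V$ open tubular thickenings of $\overline{\GO^-}$ and $\overline{\GO^+}$ and $U \cap V$ deformation-retracting to $\p\GO$; since $H_1(\Rbb^3)=H_2(\Rbb^3)=0$, the sequence collapses to the isomorphism $H_1(\p\GO) \cong H_1(\GO^-) \oplus H_1(\GO^+)$. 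The concluding ``in particular'' assertion is then immediate: the two decompositions coincide exactly when $\Mpm^\pm=\Hdrf^\pm$ and $\Hdf^\Dcal=\Hdf$, which by the codimension formulas means $b_1(\GO^\pm)=b_1(\p\GO)=0$, and for the closed orientable surface $\p\GO$ this is equivalent to simple connectedness.

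The main obstacle I expect is the exterior half of the codimension identity: one must verify that every de Rham class of $\GO^+$ admits a divergence-free, rotation-free representative whose gradient already lies in the weighted space $H^1_{-1}(\GO^+,\Rbb^3)$, and that the primitive constructed in the kernel argument, after an appropriate normalization of its constant at infinity, genuinely belongs to $H^2_{-1}(\GO^+)$, matching the definition of $\Mpm^+$ in \eqref{eq_mp_defi} rather than landing in a strictly larger weighted class.
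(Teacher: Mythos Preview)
Your architecture matches the paper's: reduce \eqref{eq_betti_from_below} to \eqref{eq_betti_from_below1} via $b_1(\p\GO)=b_1(\GO^-)+b_1(\GO^+)$ (the paper records this reduction through Theorem~\ref{theo_dim_sum_below}), then prove \eqref{eq_betti_from_below1} by showing that $f\mapsto[v_\pm^f]$ descends to an isomorphism from $\Hdrf^\pm/\Mpm^\pm$ onto a cohomological object of dimension $b_1(\GO^\pm)$. The kernel computation and the Mayer--Vietoris step are fine. The gap is in surjectivity: once you have produced the divergence-free, rotation-free representative $\omega-\nabla\phi$, you take its boundary trace as a preimage in $\Hdrf^-$, but membership in $\Hdrf^-$ requires that trace to lie in $H^{1/2}(\p\GO,\Rbb^3)$, hence $\omega-\nabla\phi\in H^1(\GO^-,\Rbb^3)$. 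Your Neumann problem has smooth interior data, yet $H^2$ regularity of $\phi$ up to a merely $C^{1,\alpha}$ boundary is not the ``standard'' elliptic regularity you invoke; this is exactly where the hypothesis $\alpha>1/2$ must do real work. The same issue, as you already suspect, recurs in the weighted exterior setting.

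The paper isolates this point as Lemma~\ref{lemm_regularity}: the inclusion $H(\rmop{rot},U)\cap H_0(\rmop{div},U)\subset H^1(U,\Rbb^3)$, quoting Filonov (a specifically three-dimensional result valid for $C^{1,\alpha}$ with $\alpha>1/2$) for $U=\GO^-$ and a cutoff argument combined with Kuhn--Pauly for $U=\GO^+$. Rather than running the Neumann corrector by hand, the paper maps $\Hdrf^\pm/\Mpm^\pm$ into the $L^2$-quotients $H(\rmop{rot}0,\GO^-)/\nabla H^1(\GO^-)$ and $H(\rmop{rot}0,\GO^+)/\nabla H^1_{-1}(\GO^+)$, and then invokes the off-the-shelf Helmholtz decomposition of Osterbrink--Pauly together with Picard's identity $\dim\Hscr_{\mathrm{t}}(\GO^\pm)=b_1(\GO^\pm)$ (packaged as Theorem~\ref{theo_Helmholtz}). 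This absorbs both your Hodge--Neumann construction and your exterior weighted-space concerns into existing literature, leaving Lemma~\ref{lemm_regularity} as the single new analytic ingredient needed to complete surjectivity.
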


We prove Theorem \ref{theo_betti_from_below} using results of \cite{Osterbrink-Pauly20, Picard82}.  In \cite{Osterbrink-Pauly20} the quotient of the space of rotation-free fields over that of gradient fields is characterized in the interior and exterior domains using Helmholtz(-Weyl) type decomposition. In \cite{Picard82}, a Hodge-de Rham type theorem is used to show the dimension of the quotient space equals to the first Betti number.  These results are summarized in Theorem \ref{theo_Helmholtz} of this paper. We isomorphically identify the spaces $\Hdrf^\pm / \Mpm^\pm$ with those quotient spaces. The quotient space of rotation-free fields over gradient fields is reminiscent of the first de Rham cohomology group which is defined by
\[
    H_\mathrm{dR}^1 (\GO^\pm):=\frac{\Ker (\nabla\times: C^\infty (\GO^\pm, \Rbb^3)\longrightarrow C^\infty (\GO^\pm, \Rbb^3))}{\Ran (\nabla: C^\infty (\GO^\pm)\longrightarrow C^\infty (\GO^\pm, \Rbb^3))}
\]
(see, for example, \cite{Bott-Tu82}).
In \cite{FJKc1a}, the first de Rham cohomology group is used to prove \eqref{eq_vf_betti}.

It is helpful to mention that the Helmholtz decomposition is generalized to compact manifolds with boundaries in terms of the differential forms. This decomposition is called the Hodge decomposition or Hodge-Morrey-Friedrichs decomposition \cite{GMM11, Karpukhin19, Schwarz95}. For the Helmholtz decomposition in the exterior domains, refer to recent work \cite{HKSSY212D, HKSSY213D} and the references therein.

The rest of the paper is devoted to proofs of Theorems \ref{1001} and \ref{theo_betti_from_below}. They are given in Section \ref{subs_BIO} and Section \ref{subs_Betti}, respectively. This paper ends with a short discussion. The results of this paper hold for domains in $\Rbb^d$ with $d>3$ if the boundary is $C^{1,1}$. We explain this in Discussion.

\section{Proof of Theorem \ref{1001}}\label{subs_BIO}

We prove Theorem \ref{theo_vf_potential} using the layer potential theory. We define the integral operator $\Jcal$ by
\begin{align}
    \label{eq_operator_J}
    \Jcal [f](x):=-\rmop{p.v.}\int_{\partial\Omega} \nabla_y \GG(x-y)\cdot f(y)\, \df\sigma (y), \quad x \in \p\GO
\end{align}
for $f \in L^2(\partial\Omega,\Rbb^3)$. Then, its $L^2$-adjoint operator $\Jcal^*$ is given by
\begin{align}
   \label{eq_operator_Js}
    \Jcal^* [\varphi](x):=-\rmop{p.v.}\int_{\partial\Omega} \nabla_x \GG(x-y)\varphi(y)\, \df\sigma (y), \quad x \in \p\GO
\end{align}
for $\varphi \in L^2(\partial\Omega)$. Then the following mapping properties of $\Jcal$ and $\Jcal^*$ are known: see, for example, \cite[Lemma 4.3]{FJKc1a} for a proof.

\begin{lemm}
    \label{lemm_J_bdd}
    Let $\partial\Omega$ be $C^{1, \alpha}$ for some $\alpha>1/2$.
    \begin{enumerate}[label=(\roman*)]
        \item \label{enum_J_bdd}$\Jcal$ is a bounded operator from $H^s (\partial\Omega, \Rbb^3)$ into $H^s (\partial\Omega)$ for all $|s|\leq 1$.
        \item \label{enum_Js_bdd}$\Jcal^*$ is a bounded operator from $H^s (\partial\Omega)$ into $H^s (\partial\Omega, \Rbb^3)$ for all $|s|\leq 1$.
    \end{enumerate}
\end{lemm}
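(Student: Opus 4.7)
The plan is to establish Lemma \ref{lemm_J_bdd} by combining the $L^2$-adjointness of $\Jcal$ and $\Jcal^*$, trace-theoretic representations of both operators as arithmetic means of boundary limits of $\nabla\SL$ (respectively $\nabla\cdot\SL$), and the standard regularity chain for the single layer potential on $C^{1,\alpha}$ domains with $\alpha>1/2$. Since $\nabla_y \GG(x-y) = -\nabla_x \GG(x-y)$, Fubini yields $\jbk{\Jcal[f],\varphi}_{\p\GO} = \jbk{f,\Jcal^*[\varphi]}_{\p\GO}$ for $L^2$ test data, so $\Jcal$ and $\Jcal^*$ form an $L^2$-adjoint pair; by duality on the Hilbert scale, the $H^s\to H^s$ bound for one is equivalent to the $H^{-s}\to H^{-s}$ bound for the other, so it suffices to treat the range $0\leq s\leq 1$.

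From the principal-value definitions one directly reads off, for $x\in\p\GO$,
\[
\Jcal^*[\varphi](x) = -\tfrac{1}{2}\bigl(\nabla\SL[\varphi]|_+(x) + \nabla\SL[\varphi]|_-(x)\bigr), \qquad \Jcal[f](x) = \tfrac{1}{2}\bigl(\nabla\cdot\SL[f]|_+(x) + \nabla\cdot\SL[f]|_-(x)\bigr),
\]
with $\SL$ acting componentwise in the vector-valued case. For $\varphi\in H^s(\p\GO)$ with $0\leq s\leq 1$, I would then chain: the single-layer smoothing property $\SL:H^s(\p\GO)\to H^{s+1}(\p\GO)$; the interior and exterior Dirichlet-regularity upgrade from boundary data in $H^{s+1}(\p\GO)$ to a harmonic extension in $H^{s+3/2}(\GO^\pm)$ (with the appropriate weighted space in the exterior); differentiation, giving $\nabla\SL[\varphi]\in H^{s+1/2}(\GO^\pm,\Rbb^3)$; and the trace theorem, giving $\nabla\SL[\varphi]|_\pm\in H^s(\p\GO,\Rbb^3)$. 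Combined with the representation above, this proves $\Jcal^*:H^s(\p\GO)\to H^s(\p\GO,\Rbb^3)$ boundedly, and an identical chain applied to $\SL[f]$ proves $\Jcal:H^s(\p\GO,\Rbb^3)\to H^s(\p\GO)$. The duality observation of the first paragraph extends both bounds to $-1\leq s\leq 0$.

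The decisive input, and the main obstacle, is that on $C^{1,\alpha}$ boundaries with $\alpha>1/2$ both the single-layer mapping $\SL:H^s(\p\GO)\to H^{s+1}(\p\GO)$ and the Dirichlet-regularity gain hold on the full range $|s|\leq 1$. For $|s|\leq 1/2$ both are classical and hold already on Lipschitz boundaries (as in \cite{Verchota84}); pushing to $|s|\leq 1$ is where the hypothesis $\alpha>1/2$ is genuinely used, controlling the commutator between tangential differentiation and the singular integral kernel $\GG$. This is the one place I would simply invoke the refined boundary-integral mapping properties from the classical literature; once they are available, the remainder of the proof is a short combination of the trace theorem and duality.
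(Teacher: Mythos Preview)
The paper does not prove this lemma; it simply cites \cite[Lemma~4.3]{FJKc1a}. So there is no in-paper argument to compare against, and your proposal is being measured against a black box.

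Your overall strategy is sound and is the natural one: duality between $\Jcal$ and $\Jcal^*$ reduces matters to $0\le s\le 1$; the jump formulas \eqref{eq_jump_J}--\eqref{eq_jump_Js} immediately give the representations of $\Jcal$ and $\Jcal^*$ as arithmetic means of two-sided traces of $\nabla\cdot\SL$ and $\nabla\SL$; and the decisive inputs you name (the refined mapping properties of $\SL$ on $C^{1,\alpha}$ boundaries with $\alpha>1/2$, as in \cite{Mazya-Shaposhnikova05}) are the right ones. The duality and trace-average steps are unproblematic.

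One genuine concern is the specific chain you spell out for the endpoint $s=1$. That route passes through $\SL[\varphi]\in H^{2}(\p\GO)$ and then through interior regularity $\SL[\varphi]\in H^{5/2}(\GO^\pm)$ before tracing back. On a merely $C^{1,\alpha}$ boundary with $\alpha\in(1/2,1)$, the space $H^{t}(\p\GO)$ is only invariantly defined for $|t|<1+\alpha$, and the Dirichlet lift $H^{t}(\p\GO)\to H^{t+1/2}(\GO^\pm)$ is correspondingly limited; neither $H^{2}(\p\GO)$ nor $H^{5/2}(\GO^\pm)$ is generally available. So the interior-regularity detour, as written, overshoots what the domain allows at the top of the range. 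The alternative you mention in your last paragraph---controlling the commutator of tangential differentiation with the singular kernel, which is precisely where the hypothesis $\alpha>1/2$ is used---is the robust route and is presumably what \cite{FJKc1a} actually does: it works entirely on $\p\GO$ within $H^{s}$ for $|s|\le 1$ (equivalently, it uses directly that $\NP^*$ and the tangential gradient of $\SL$ are bounded on $H^{s}$ for $|s|\le 1$) and never invokes $H^{5/2}$ interior regularity. In short: your plan is correct, but at $s=1$ you should commit to the boundary commutator argument rather than the trace-of-interior-regularity chain.
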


We invoke the jump formulas
\begin{align}
    \nabla\cdot {\SL}[g]|_\pm &= \pm \frac{1}{2}g\cdot \nv+\Jcal [g], \label{eq_jump_J} \\
    \nabla {\SL}[\varphi]|_\pm &= \pm \frac{1}{2}\varphi \nv -\Jcal^* [\varphi], \label{eq_jump_Js}
\end{align}
for $g\in L^2 (\partial\Omega; \Rbb^3)$ and $\varphi\in L^2 (\partial\Omega)$ (see \cite{DKV88}). According to \cite[Corollary 6.6]{Gesztesy-Mitrea11}, the relations \eqref{eq_jump_J} and \eqref{eq_jump_Js} can be extended to  $g \in H^{-1/2}(\partial\Omega, \Rbb^3)$ and $\varphi \in H^{-1/2}(\partial\Omega)$, respectively.

We define operators $\Acal_\pm: H^{1/2}(\partial\Omega, \Rbb^3)\to H^{-1/2}(\partial\Omega)$ by
\begin{equation}
    \label{eq_apm}
        \Acal_\pm [f]:=\pm \frac{1}{2}\nv \cdot \SL^{-1}[f]+\Jcal\SL^{-1} [f] = \nabla\cdot {\SL}[\SL^{-1}[f]]|_\pm.
\end{equation}
As before, $\SL^{-1}$ is the inverse of $\SL$ as mapping from $H^{-1/2}(\partial\Omega, \Rbb^3)$ to $H^{1/2}(\partial\Omega, \Rbb^3)$. If $\partial\Omega$ is $C^{1, \alpha}$ for some $\alpha>1/2$, then the normal vector field $\nu$ is $C^{0, \alpha}$ and the mapping $\varphi\in H^{1/2}(\partial\Omega)\mapsto \varphi\nv \in H^{1/2}(\partial\Omega, \Rbb^3)$ is bounded (see \cite[Lemma 4.2]{FJKc1a}, \cite[Theorem 1 (vi)]{Mazya-Shaposhnikova05}). By the duality, the mapping $g\in H^{-1/2}(\partial\Omega, \Rbb^3)\mapsto g\cdot \nv \in H^{-1/2}(\partial\Omega)$ is also bounded.
It then follows from Lemma \ref{lemm_J_bdd} \ref{enum_J_bdd} that $\Acal_\pm$ are bounded. Moreover, we see from \eqref{SS^-} that
$$
\Acal_- [f] = \nabla \cdot v_-^f|_-, \quad \Acal_+ [f] = \nabla \cdot v_+^f|_+,
$$
of which the following theorem is an immediate consequence.
\begin{theo}
    \label{theo_apm_div_free}
    If $\partial\Omega$ is $C^{1, \alpha}$ for some $\alpha>1/2$, then we have
    \begin{equation}
    \Ker \Acal_\pm=\Hdf^\pm.
    \end{equation}
\end{theo}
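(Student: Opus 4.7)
The statement is essentially a reformulation of the identities
\[
\Acal_-[f] = \nabla\cdot v_-^f|_-, \qquad \Acal_+[f] = \nabla\cdot v_+^f|_+
\]
established just before the theorem, combined with a harmonic-extension argument. My plan is to show the two inclusions $\Hdf^\pm \subset \Ker\Acal_\pm$ and $\Ker\Acal_\pm \subset \Hdf^\pm$ separately.

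For the easy inclusion $\Hdf^\pm \subset \Ker\Acal_\pm$, I would simply observe that if $f \in \Hdf^\pm$, then by definition $\nabla\cdot v_\pm^f = 0$ in $\GO^\pm$, so the boundary trace vanishes; by the two identities above, $\Acal_\pm[f] = 0$.

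The reverse inclusion is the substantive one. For $f \in H^{1/2}(\p\GO,\Rbb^3)$, set $w_\pm := \nabla\cdot v_\pm^f$. Since $v_\pm^f = \SL[\SL^{-1}[f]]$ is harmonic and componentwise smooth in $\GO^\pm$, one has $\lap w_\pm = \nabla\cdot \lap v_\pm^f = 0$ in $\GO^\pm$. Moreover, by definition of $v_\pm^f$ and the jump formula \eqref{eq_jump_J} applied to $\SL^{-1}[f] \in H^{-1/2}(\p\GO,\Rbb^3)$, the trace of $w_\pm$ on $\p\GO$ from $\GO^\pm$ equals $\Acal_\pm[f]$. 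Assuming $f \in \Ker\Acal_\pm$, then $w_\pm$ is a harmonic function in $\GO^\pm$ with vanishing Dirichlet trace, so by uniqueness of the Dirichlet problem it vanishes identically, proving $f \in \Hdf^\pm$.

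The only mild subtlety is the exterior case: here one needs to apply uniqueness for the exterior Dirichlet problem, which requires appropriate decay of $w_+$ at infinity. Since $v_+^f \in H^1_{-1}(\GO^+,\Rbb^3)$ is represented by the single layer potential $\SL[\SL^{-1}[f]]$, the standard decay estimates at infinity for layer potentials of $H^{-1/2}$ densities yield $v_+^f(x) = O(|x|^{-1})$ and $\nabla v_+^f(x) = O(|x|^{-2})$ as $|x|\to\infty$, so $w_+(x) = O(|x|^{-2})$. A harmonic function on $\GO^+$ that vanishes on $\p\GO$ and decays to $0$ at infinity must vanish, which closes the argument. I do not expect any real obstacle beyond verifying that the trace identification and the decay estimates apply at the claimed Sobolev regularity, which is guaranteed by Lemma \ref{lemm_J_bdd} and the mapping properties of $\SL$ recalled in the introduction.
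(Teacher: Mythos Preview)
Your proof is correct and is precisely the argument the paper has in mind: the paper simply declares the theorem ``an immediate consequence'' of the identities $\Acal_\pm[f]=\nabla\cdot v_\pm^f|_\pm$ without spelling out the harmonic-extension/uniqueness step, which you supply. Your treatment of the exterior decay is a reasonable way to justify the uniqueness in $\GO^+$ that the paper leaves implicit.
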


Let $\Acal_\pm^*: H^{1/2}(\partial\Omega) \to H^{-1/2}(\partial\Omega, \Rbb^3)$ be the $L^2$-adjoint operators of $\Acal_\pm$, which are represented as
\begin{equation}\label{eq_aps_j}
    \Acal_\pm^* [\varphi]=\SL^{-1}\left[\pm \frac{1}{2}\varphi \nv + \Jcal^*[\varphi]\right].
\end{equation}

\begin{theo}
    \label{theo_ran_apms_closed}
    If $\partial\Omega$ is $C^{1, \alpha}$ for some $\alpha>1/2$, then $\Ran \Acal_\pm^*$ is closed in $H^{-1/2}(\partial\Omega, \Rbb^3)$.
\end{theo}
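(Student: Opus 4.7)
The plan is in two steps: first, use the jump formula to rewrite $\Acal_\pm^*$ as $-\Scal^{-1}$ composed with the opposite boundary trace of $\nabla\Scal[\varphi]$, and deduce that $\Ran \Acal_\pm^* = \Scal^{-1}(\Mpm^\mp)$; second, prove that $\Mpm^\pm$ is closed in $H^{1/2}(\partial\Omega, \Rbb^3)$, from which the conclusion follows since $\Scal^{-1}$ is a homeomorphism from $H^{1/2}(\partial\Omega, \Rbb^3)$ onto $H^{-1/2}(\partial\Omega, \Rbb^3)$.

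For the first step, comparing \eqref{eq_aps_j} with the jump formula \eqref{eq_jump_Js} yields
\[
\Acal_\pm^*[\varphi] = -\Scal^{-1}\bigl[\nabla\Scal[\varphi]|_\mp\bigr], \quad \varphi \in H^{1/2}(\partial\Omega).
\]
On a $C^{1,\alpha}$ boundary with $\alpha > 1/2$, the single-layer potential $\Scal$ is an isomorphism from $H^{1/2}(\partial\Omega)$ onto $H^{3/2}(\partial\Omega)$ (combining the Verchota isomorphism with the standard smoothing property of $\Scal$ on $C^{1,\alpha}$ boundaries), and the Dirichlet harmonic extension realizes an isomorphism from $H^{3/2}(\partial\Omega)$ onto the space of harmonic functions in $H^2(\Omega^-)$, respectively in $H^2_{-1}(\GO^+)$. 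Hence $\{\nabla\Scal[\varphi]|_\mp : \varphi \in H^{1/2}(\partial\Omega)\} = \Mpm^\mp$, and so $\Ran \Acal_\pm^* = \Scal^{-1}(\Mpm^\mp)$.

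For the second step, for $\Mpm^-$ I would establish the a priori estimate
\[
\|u\|_{H^2(\Omega^-)} \leq C \|\nabla u|_-\|_{H^{1/2}(\partial\Omega, \Rbb^3)}
\]
for every harmonic $u \in H^2(\Omega^-)$ with vanishing mean on each connected component of $\Omega^-$, by a standard contradiction-compactness scheme: a normalized violating sequence $u_k$ with $\|u_k\|_{H^2}=1$ and $\|\nabla u_k|_-\|_{H^{1/2}} \to 0$ has, by the compact embedding $H^2(\Omega^-) \Subset H^1(\Omega^-)$, a subsequential limit $u$ which is harmonic with $\nabla u|_- = 0$, hence constant on components, hence zero by the mean-zero normalization; combined with the elliptic estimate $\|u\|_{H^2(\Omega^-)} \leq C\bigl(\|\nabla u|_-\|_{H^{1/2}(\partial\Omega)} + \|u\|_{H^1(\Omega^-)}\bigr)$ for harmonic $u$ on $C^{1,\alpha}$ domains (which follows from the fact that the tangential part of $\nabla u|_-$ controls $u|_-$ in $H^{3/2}(\partial\Omega)$ modulo constants, combined with standard $H^2$-regularity for the Dirichlet problem), this yields the desired contradiction. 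Applied to differences $u_n - u_m$ of a sequence with $\nabla u_n|_- \to g$ in $H^{1/2}$, the estimate shows $\{u_n\}$ is Cauchy in $H^2(\Omega^-)$, and the limit $u$ is harmonic with $\nabla u|_- = g$, so $g \in \Mpm^-$. The argument for $\Mpm^+$ is the same in the weighted framework $H^2_{-1}(\GO^+)$: no quotient is needed since nonzero constants do not belong to $H^2_{-1}(\GO^+)$ in $\Rbb^3$, and the required compactness can be obtained either via a Kelvin transform converting the exterior problem into an interior one, or by combining interior compactness with the decay at infinity enforced by the weight.

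The main obstacle is the a priori estimate, especially its exterior version in weighted Sobolev spaces; the interior version is fairly standard but uses $C^{1,\alpha}$ elliptic regularity near the borderline for $H^2$-theory, and the exterior version requires careful handling of the weight and of decay at infinity.
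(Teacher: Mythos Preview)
Your proposal is correct and takes a genuinely different route from the paper. The paper argues directly on the boundary: given $\Acal_-^*[\varphi_j]\to f$ in $H^{-1/2}$, it takes the normal component of $\Scal\Acal_-^*[\varphi_j]$ to obtain $(-\tfrac12 I+\NP^*)[\varphi_j]\to\nu\cdot\Scal[f]$ in $H^{1/2}$, and then exploits the invertibility of $-\tfrac12 I+\NP^*$ on the mean-zero subspace $\Hsb_0\subset H^{1/2}(\partial\Omega)$ (together with its one-dimensional kernel) to extract a limit $\varphi$ with $\Acal_-^*[\varphi]=f$. Your first step---the identification $\Ran\Acal_\pm^*=\Scal^{-1}(\Mpm^\mp)$---is exactly what the paper derives \emph{later}, in the proof of Theorem~\ref{1001}, via \eqref{201}--\eqref{202} and the relation $\Scal\Acal_\pm^*[\varphi]=-\nabla\Scal[\varphi]|_\mp$; you are simply front-loading that computation. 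The real divergence is your second step: rather than the boundary Fredholm argument with $\NP^*$, you establish closedness of $\Mpm^\pm$ through an interior/exterior a~priori estimate obtained by Rellich compactness and $H^2$ Dirichlet regularity. The paper's route stays entirely within the layer-potential calculus and needs only the classical invertibility of $-\tfrac12 I+\NP^*$ modulo constants, so it is more self-contained; your route is more PDE-flavored and, as you correctly flag, leans on $H^2$ regularity for the Dirichlet problem on $C^{1,\alpha}$ domains with $\alpha>1/2$ (the Maz'ya--Shaposhnikova result already invoked for \eqref{201}--\eqref{202}) and on a weighted compactness argument in $\Omega^+$ that would need to be spelled out carefully. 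In return, your argument makes the closedness of $\Mpm^\pm$ explicit as a standalone fact, which is of independent interest.
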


\begin{proof}
    We only prove the theorem for $\Acal_-^*$.
    Assume that the sequence $\{\Acal_-^*[\varphi_j]\}_{j=1}^\infty$ converges to $f$ in the $H^{-1/2}(\partial\Omega, \Rbb^3)$-norm topology. By \eqref{eq_aps_j}, we have
    \begin{equation}\label{eq_j_converge}
        -\frac{1}{2}\varphi_j\nv+\Jcal^* [\varphi_j] \to {\SL}[f] \ \text{ in } H^{1/2}(\partial\Omega, \Rbb^3).
    \end{equation}

    Let $\NP^*$ be the Neumann-Poincar\'e operator on $\p\GO$ for the Laplace operator, namely,
    \begin{equation}\label{eq_NP}
        \begin{aligned}
            \NP^*[\varphi](x):=&\, \int_{\partial\Omega} \p_{\nu_x} \GG(x-y) \varphi(y)\, \df \sigma (y) \\
            =&\,\frac{1}{4\pi}\int_{\partial\Omega} \frac{(x-y)\cdot \nv_x}{|x-y|^3}\varphi (y)\, \df \sigma (y),
            \quad x \in \p\GO.
        \end{aligned}
    \end{equation}
    Then, the formula \eqref{eq_operator_Js} immediately yields the relation $\NP^*[\varphi]= \nu \cdot \Jcal^*[\varphi]$. Since the normal vector field $\nu$ is $C^{0, \alpha}$ and $\alpha>1/2$, the mapping $g\in H^{1/2}(\partial\Omega, \Rbb^3)\mapsto \nv\cdot g \in H^{1/2}(\partial\Omega)$ is bounded. Thus we have
    \begin{equation}\label{eq_nps_converge}
        -\frac{1}{2}\varphi_j+\NP^* [\varphi_j]\to \nv\cdot{\SL}[f] \text{ in } H^{1/2}(\partial\Omega).
    \end{equation}

Let
\[
        \Hsb_0:=\left\{ \varphi\in H^{1/2}(\partial\Omega) \,\middle|\, \int_{\partial\Omega} \varphi\, \df \sigma =0 \right\}.
    \]
Then,  $-1/2I+\NP^*$ is invertible on $\Hsb_0$. In fact, it is known that it is invertible on $L_0^2(\partial\Omega)$ (see \cite{Verchota84}). If $h \in \Hsb_0$ and $\varphi \in L_0^2(\partial\Omega)$ satisfy $(-1/2I+\NP^*)[\varphi]=h$, then $\varphi \in \Hsb_0$ since $\NP^*$ maps $L^2(\partial\Omega)$ into $H^{1/2}(\partial\Omega)$ boundedly (see, for example, \cite[Corollary A.1 (i)]{FJKc1a} for a proof of this fact).

Since $\Ker (-1/2I+\NP^*)\subset H^{1/2}(\partial\Omega)$ is a one-dimensional space by the connectedness of $\Omega$, we can take the basis $\varphi_0$ of $\Ker (-1/2I+\NP^*)$ such that $\int_{\partial\Omega} \varphi_0\,\df \sigma=1$.

We set
\[
\psi_j:=\varphi_j-a_j\varphi_0 \quad\text{with } a_j:=\int_{\partial\Omega} \varphi_j\, \df \sigma.
\]
Then each $\psi_j$ belongs to $\Hsb_0$ and
    \begin{equation}
        \label{eq_nps_converge_decomposed}
        \left(-\frac{1}{2}I+\NP^*\right) [\psi_j]\to \nv \cdot {\SL}[f] \quad \text{in } H^{1/2}(\partial\Omega).
    \end{equation}
Thus $\{ (-1/2I+\NP^*)[\psi_j]\}_{j=1}^\infty$ forms a Cauchy sequence in $\Hcal_0$. Since $-1/2I+\NP^*$ is invertible on $\Hsb_0$, the sequence $\{\psi_j\}_{j=1}^\infty$ is also a Cauchy sequence. Thus the limit
\[
    \psi:=\lim_{j\to\infty} \psi_j \quad \text{in } H^{1/2}(\partial\Omega)
\]
exists. Since the mappings $\varphi \mapsto \varphi \nv$ and the operator $\Jcal^*$ are bounded from $H^{1/2}(\partial\Omega)$ into $H^{1/2}(\partial\Omega, \Rbb^3)$, we have
    \[
         -\frac{1}{2}\varphi_j\nv+\Jcal^* [\varphi_j] - a_j \left(-\frac{1}{2}\varphi_0 \nv+\Jcal^* [\varphi_0] \right) \to -\frac{1}{2}\psi \nv+\Jcal^* [\psi]
    \]
in $H^{1/2}(\partial\Omega, \Rbb^3)$.

    Let $\Psi:= -1/2\varphi_0\nv+\Jcal^* [\varphi_0]$. If $\Psi =0$, then \eqref{eq_j_converge} implies that
    \[
    -\frac{1}{2}\psi \nv+\Jcal^*[\psi]={\SL}[f].
    \]
    Thus we have $\Acal_-^*[\psi]=f$ by \eqref{eq_aps_j}. If $\Psi\neq 0$, then \eqref{eq_j_converge} implies that
    \[
        a_j \Psi\to {\SL}[f]+\frac{1}{2}\psi \nv-\Jcal^* [\psi] \quad  \text{in } H^{1/2}(\partial\Omega, \Rbb^3).
    \]
    In particular, $\{ a_j \}_{j=1}^\infty$ is a Cauchy sequence in $\Rbb$ and hence the limit $a:=\lim_{j\to \infty}a_j$ exists. Thus we have
    \[
        -\frac{1}{2}(a\varphi_0+\psi)\nv+\Jcal^*[a\varphi_0+\psi]={\SL}[f],
    \]
    which is equivalent to $\Acal_-^*[a\varphi_0+\psi]=f$. Therefore $\Ran \Acal_-^*$ is closed in $H^{-1/2}(\partial\Omega, \Rbb^3)$.
\end{proof}

Suppose that $\partial\Omega$ is $C^{1, \alpha}$ for some $\alpha>1/2$. If $u \in H^2_{-1}(\GO^+)$ and $u$ is harmonic in $\GO^+$, then the trace $u|_{+}$ of $u$ belongs to $H^{3/2}(\partial\Omega)$. Since $\SL$ is an invertible operator from $H^{1/2}(\partial\Omega)$ onto $H^{3/2}(\partial\Omega)$ \cite[Theorem 1 (iii)]{Mazya-Shaposhnikova05}, we have $u(x)= {\SL}[\psi](x)$ for $x \in \GO^+$ if we set $\psi:= \SL^{-1}[u|_{+}]$ on $\p\GO$. Thus we have the following characterization of $\Mpm^+$:
\begin{equation}\label{201}
        \Mpm^+ =\{ \nabla {\SL}[\psi]|_+ \mid \psi\in H^{1/2} (\partial\Omega)\}.
    \end{equation}
Likewise, we have
\begin{equation}\label{202}
        \Mpm^- =\{ \nabla {\SL}[\psi]|_- \mid \psi\in H^{1/2} (\partial\Omega)\}.
    \end{equation}

We are now ready to prove Theorem \ref{1001}.

\begin{proof}[Proof of Theorem \ref{1001}]
For any subset $Y$ of $H^{1/2}(\p\GO, \Rbb^3)$, we define the annihilator $Y^0$ by
    \[
            Y^0 := \{ f \in H^{-1/2}(\p\GO, \Rbb^3) \mid \langle f, g \rangle_{\p\GO}=0, \ \forall g\in Y \}.
        \]
     Since $(\Ker \Acal_\pm)^0=\overline{\Ran \Acal_\pm^*}$, we infer from Theorems \ref{theo_apm_div_free} and \ref{theo_ran_apms_closed} that
     \[
     (\Hdf^\pm)^0=\Ran \Acal_\pm^*.
     \]

    By the definition \eqref{eq_inner_product} of the inner products on $H^{1/2}(\partial\Omega)$ and $H^{1/2}(\partial\Omega, \Rbb^3)$, we have
    \[
        f\in (\Hdf^\pm)^\perp \Longleftrightarrow \jbk{\SL^{-1}[f], g}_{\partial\Omega}=0, \ \forall g\in \Hdf^\pm  \Longleftrightarrow \SL^{-1}[f]\in (\Hdf^\pm)^0,
    \]
    which implies $(\Hdf^\pm)^\perp={\SL}[(\Hdf^\pm)^0]$. Thus we have
    \[
        (\Hdf^\pm)^\perp= {\SL}[\Ran \Acal_\pm^*]=\Ran \SL \Acal_\pm^*.
    \]
    By \eqref{eq_jump_Js} and \eqref{eq_aps_j}, we have
    \[
    \SL \Acal_\pm^*[\varphi]= - \nabla {\SL}[\varphi]|_\mp,
    \]
    and hence
    \[
    \Ran \SL \Acal_\pm^* =\{ \nabla {\SL}[\psi]|_\mp \mid \psi\in H^{1/2} (\partial\Omega)\}.
    \]
    Thus \eqref{1000} follows from \eqref{201} and \eqref{202}.
\end{proof}

Before finishing this section, we prove the following theorem which will be used in the next section.

\begin{theo}
    \label{theo_dim_sum_below}
    Suppose that $\partial\Omega$ is $C^{1, \alpha}$ for some $\alpha>1/2$. Let
    \begin{equation}\label{Xcal_def}
    \Xcal^\pm:=\Hdrf^\pm \cap \Hdf.
    \end{equation}
    It holds that
    \begin{align}
        \Hdrf^\pm &=\Mpm^\pm \oplus \Xcal^\pm, \label{eq_decomp_drf} \\
        \Hdf &=\Hdf^\Dcal \oplus \Xcal^- \oplus \Xcal^+. \label{eq_decomp_drf2}
    \end{align}
\end{theo}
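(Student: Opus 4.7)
The plan is to exploit the two orthogonal decompositions we have already established, namely the one from Theorem \ref{theo_vf_potential},
\[
H^{1/2}(\partial\Omega, \Rbb^3) = \Mpm^-\oplus \Mpm^+\oplus \Hdf,
\]
together with the one from \eqref{eq_vf_decomposition2},
\[
H^{1/2}(\partial\Omega, \Rbb^3) = \Hdrf^-\oplus \Hdrf^+\oplus \Hdf^\Dcal,
\]
and to transfer information between them using the identity $\Mpm^\mp=(\Hdf^\pm)^\perp$ from Theorem \ref{1001}.

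First I would prove \eqref{eq_decomp_drf}, say for the minus sign. Note that $\Mpm^-\subset \Hdrf^-$ since the gradient of a harmonic function is rotation-free and has vanishing Laplacian (by uniqueness of the harmonic extension, $v_-^f=\nabla u$ in $\Omega^-$). Now take $f\in \Hdrf^-$ and decompose it along \eqref{eq_vf_potential} as $f=f_1+f_2+f_3$ with $f_1\in \Mpm^-$, $f_2\in \Mpm^+$, $f_3\in \Hdf$. Since $\Hdrf^-\subset \Hdf^-$ by definition and $\Mpm^+=(\Hdf^-)^\perp$ by Theorem \ref{1001}, one gets $\langle f, f_2\rangle_*=0$, whence $f_2=0$ (both $f_1$ and $f_3$ are also orthogonal to $f_2$). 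Hence $f=f_1+f_3$, and $f_3=f-f_1\in \Hdrf^-$ because $f,f_1\in \Hdrf^-$, giving $f_3\in \Hdrf^-\cap \Hdf=\Xcal^-$. For directness of the sum, $\Mpm^-\cap \Xcal^-\subset \Mpm^-\cap \Hdf=\{0\}$ by the orthogonality in \eqref{eq_vf_potential}. The argument for the plus sign is identical.

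Next I would deduce \eqref{eq_decomp_drf2} by feeding \eqref{eq_decomp_drf} into \eqref{eq_vf_decomposition2}. Take $f\in \Hdf$ and decompose it along \eqref{eq_vf_decomposition2} as $f=g_-+g_++g_0$ with $g_\pm\in \Hdrf^\pm$ and $g_0\in \Hdf^\Dcal$. Applying \eqref{eq_decomp_drf} to $g_\pm$, write $g_\pm=h_\pm+k_\pm$ with $h_\pm\in \Mpm^\pm$ and $k_\pm\in \Xcal^\pm\subset \Hdf$. Then
\[
f=h_-+h_++(k_-+k_++g_0),
\]
where $k_-+k_++g_0\in \Hdf$ since $\Hdf^\Dcal\subset \Hdf$ by \eqref{eq:subset}. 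But $f\in \Hdf$ itself, so the uniqueness in the decomposition \eqref{eq_vf_potential} forces $h_-=h_+=0$; consequently $f=k_-+k_++g_0$ with summands in $\Xcal^-$, $\Xcal^+$, and $\Hdf^\Dcal$ respectively. Directness is inherited from \eqref{eq_vf_decomposition2} because $\Xcal^\pm\subset \Hdrf^\pm$.

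No serious obstacle is expected: once Theorems \ref{1001} and \ref{theo_vf_potential} are in hand, the argument is a bookkeeping exercise combining the two orthogonal decompositions. The only point requiring some care is the application of $\Mpm^+=(\Hdf^-)^\perp$ to eliminate the $\Mpm^+$ component of an element of $\Hdrf^-$; this step is what produces the splitting \eqref{eq_decomp_drf} and in turn makes the proof of \eqref{eq_decomp_drf2} essentially automatic.
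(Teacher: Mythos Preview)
Your proof is correct and follows essentially the same route as the paper's: both arguments decompose an element of $\Hdrf^\pm$ via Theorem \ref{theo_vf_potential}, eliminate the $\Mpm^\mp$-component by an orthogonality argument, and then feed \eqref{eq_decomp_drf} back into \eqref{eq_vf_decomposition2} to obtain \eqref{eq_decomp_drf2}. The only cosmetic differences are that the paper kills the cross term by invoking $\Hdrf^-\perp\Hdrf^+$ (from \eqref{eq_vf_decomposition2}) rather than your use of $\Mpm^\mp=(\Hdf^\pm)^\perp$ from Theorem \ref{1001}, and derives \eqref{eq_decomp_drf2} by first refining the full-space decomposition rather than working elementwise; both variants are equally valid.
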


\begin{proof}
We only prove \eqref{eq_decomp_drf} for $\Hdrf^+$. The inclusion $\supset$ is obvious. To prove the opposite inclusion, suppose $f\in \Hdrf^+$. According to Theorem \ref{theo_vf_potential}, $f$ admits an orthogonal decomposition $f=g_- + g_+ +h$ where $g_\pm \in \Mpm^\pm$ and $h\in \Hdf$. Thus we have
\[
\jbk{f, g_-}_* = \jbk{g_-, g_-}_*.
\]
Since $\Mpm^- \subset \Hdrf^-$ and $\Hdrf^- \perp \Hdrf^+$, we have $\jbk{f, g_-}_*=0$, and hence $g_-=0$. Since $\Mpm^+ \subset \Hdrf^+$, we have
    \[
        h=f-g_+ \in\Hdrf^+.
    \]
Thus $h\in \Hdrf^+\cap \Hdf=\Xcal^+$ and hence $f=g_+ +h \in \Mpm^+ + \Xcal^+$. So, $\Hdrf^+ \subset \Mpm^+ \oplus \Xcal^+$.

The decomposition \eqref{eq_vf_decomposition2} together with \eqref{eq_decomp_drf} yields
\[
H^{1/2}(\partial\Omega, \Rbb^3)=\Mpm^- \oplus \Xcal^- \oplus \Mpm^+ \oplus \Xcal^+ \oplus \Hdf^\Dcal,
\]
from which \eqref{eq_decomp_drf2} follows.
\end{proof}

\section{Proof of Theorem \ref{theo_betti_from_below}}
\label{subs_Betti}

In order to relate the spaces $\Hdf^\pm/\Mpm^\pm$ to the Betti numbers of $\GO^\pm$, we heavily use results of \cite{Osterbrink-Pauly20, Picard82}. So, we begin by recalling them.

Let $U\subset \Rbb^3$ be a (possibly unbounded) Lipschitz domain with the compact boundary $\partial U$. For our purpose, $U$ is either $\GO^-$ or $\GO^+$. We set
\begin{align*}
    H(\rmop{rot}, U):=&\,\{ u\in L^2 (U, \Rbb^3) \mid \nabla\times u\in L^2 (U, \Rbb^3)\}, \\
    H(\rmop{rot}0, U):=&\,\{ u\in L^2 (U, \Rbb^3) \mid \nabla\times u=0 \text{ in } U\}.
\end{align*}
Let $H_0(\mathrm{div}, U)$ be the completion of the space $C_c^\infty (U, \Rbb^3)$ by the graph norm
\[
    u\in C_c^\infty (U, \Rbb^3)\longmapsto (\|u\|_{L^2 (U, \Rbb^3)}^2+\| \nabla\cdot u\|_{L^2 (U)}^2)^{1/2}.
\]
Roughly speaking, $H_0(\mathrm{div}, U)$ is the space of all $u\in L^2 (U, \Rbb^3)$ such that $\nabla\cdot u\in L^2 (U)$ and $\nv\cdot u=0$ on $\partial U$.
Finally, we define
\[
    \Hscr_\mathrm{t} (U):=\{u\in H(\rmop{rot}0, U)\cap H_0 (\rmop{div}, U) \mid \nabla\cdot u=0 \text{ in } U\}.
\]
(Here the subscript $\mathrm{t}$ stands for ``tangential'', which means the boundary condition $\nv\cdot u=0$.)

We use the following theorem which summarizes the results from \cite{Osterbrink-Pauly20, Picard82} (We also refer to \cite{Kress72, Picard79} for related works). Here, $\nabla H^1 (\Omega)$ denotes the space of all $\nabla u$ for $u \in H^1 (\Omega)$, and $\nabla H^1_{-1} (\Omega^+)$ does likewise.

\begin{theo}\label{theo_Helmholtz}
    Let $\Omega\subset \Rbb^3$ be a bounded Lipschitz domain.
    \begin{enumerate}[label=(\roman*)]
        \item \label{enum_Helmholtz_decomposition}\cite{Osterbrink-Pauly20} The following orthogonal decompositions with respect to the $L^2$-inner products hold:
        \begin{align}
            H(\rmop{rot}0, \Omega^-)&=\nabla H^1 (\Omega^-)\oplus \Hscr_\mathrm{t}(\Omega^-), \label{eq_Helmholtz_interior}\\
            H (\rmop{rot}0, \GO^+)&=\nabla H^1_{-1} (\GO^+)\oplus \Hscr_\mathrm{t} (\GO^+). \label{eq_Helmholtz_exterior}
        \end{align}
        \item \label{enum_harmonic_Betti}\cite{Picard82} The following holds:
        \begin{equation}\label{dimb1}
        \dim \Hscr_\mathrm{t}(\GO^\pm)=b_1(\GO^\pm).
        \end{equation}
    \end{enumerate}
\end{theo}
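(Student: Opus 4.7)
The plan is to prove (i) and (ii) by separate methods. Part (i) is a Helmholtz--Weyl decomposition that I would establish by a direct $L^2$-orthogonal projection argument; part (ii) identifies the finite-dimensional space $\Hscr_\mathrm{t}(\GO^\pm)$ with the first absolute de Rham cohomology via Hodge--Morrey--Friedrichs theory on a compact manifold with boundary.

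For (i) in the interior case, I would first check that $\nabla H^1(\GO^-)$ is closed in $L^2(\GO^-,\Rbb^3)$: modding out constants, the Poincaré--Wirtinger inequality shows that $\nabla\colon H^1(\GO^-)/\Rbb\to L^2(\GO^-,\Rbb^3)$ is bounded below, hence has closed range. Orthogonality between $\nabla H^1(\GO^-)$ and $\Hscr_\mathrm{t}(\GO^-)$ follows from Green's identity: for $\phi\in H^1(\GO^-)$ and $w\in\Hscr_\mathrm{t}(\GO^-)$,
\[
\int_{\GO^-}\nabla\phi\cdot w\, dx=\jbk{\nv\cdot w,\phi}_{\p\GO}-\int_{\GO^-}\phi\,\nabla\cdot w\, dx=0,
\]
since $\nabla\cdot w=0$ in $\GO^-$ and $\nv\cdot w=0$ on $\p\GO$ (the latter encoded by $w\in H_0(\rmop{div},\GO^-)$). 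Given $u\in H(\rmop{rot}0,\GO^-)$, I would let $\phi\in H^1(\GO^-)$ solve the Lax--Milgram problem $\int_{\GO^-}\nabla\phi\cdot\nabla\psi\, dx=\int_{\GO^-}u\cdot\nabla\psi\, dx$ for all $\psi\in H^1(\GO^-)$. Then $w:=u-\nabla\phi$ is $L^2$-orthogonal to $\nabla H^1(\GO^-)$, which distributionally gives $\nabla\cdot w=0$ in $\GO^-$ and $\nv\cdot w=0$ on $\p\GO$; combined with $\nabla\times w=\nabla\times u=0$, this places $w\in\Hscr_\mathrm{t}(\GO^-)$. The exterior case follows the same template in the weighted framework, with a weighted Poincaré (or Hardy) inequality on $\GO^+$ serving to make $\nabla H^1_{-1}(\GO^+)$ closed in $L^2(\GO^+,\Rbb^3)$ and to legitimize the Lax--Milgram projection.

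For (ii), I would identify $w\in\Hscr_\mathrm{t}(\GO^-)$ with the $1$-form $\omega=w_1\,dx_1+w_2\,dx_2+w_3\,dx_3$. The conditions $\nabla\times w=0$, $\nabla\cdot w=0$, and $\nv\cdot w=0$ translate to $d\omega=0$, $d^*\omega=0$, and $\iota^*(*\omega)=0$, so $\omega$ is a harmonic $1$-form satisfying the absolute (Neumann-type) boundary condition. Elliptic regularity upgrades such $\omega$ to a smooth form up to the boundary, and the Hodge--Morrey--Friedrichs decomposition on the compact Lipschitz manifold $\overline{\GO^-}$ provides a canonical isomorphism from this space onto $H^1_\mathrm{dR}(\GO^-)$; de Rham's theorem then gives $\dim H^1_\mathrm{dR}(\GO^-)=b_1(\GO^-)$. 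For $\GO^+$, I would reduce to a bounded situation either by a Kelvin-type inversion $x\mapsto x/|x|^2$ that conjugates $\GO^+$ to a bounded domain and converts the $(1+|x|)^{-1}$ weighted decay into regularity across the image of infinity, or by truncating to $\GO^+\cap B_R$ and using the decay of elements of $\Hscr_\mathrm{t}(\GO^+)$ to control the $\p B_R$-contributions as $R\to\infty$; since $\GO^+$ and $\GO^+\cap B_R$ are homotopy equivalent for large $R$, the Betti number is preserved.

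The main obstacle is the exterior situation in (ii): the Hodge--Morrey--Friedrichs theorem is classically formulated on compact manifolds with boundary, so the substantive work lies in transferring it to the unbounded domain $\GO^+$ in the weighted Sobolev setting, checking that the prescribed decay at infinity matches exactly the regularity/boundary data needed at the image of infinity under inversion, and confirming that the resulting harmonic-field space still realizes $b_1(\GO^+)$ rather than a shifted invariant. The weighted Poincaré inequality supporting the exterior half of (i) is a secondary (but essential) technical input.
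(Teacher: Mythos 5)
First, a point of comparison: the paper never proves this theorem at all --- it is imported as a black box, with (i) cited to Osterbrink--Pauly and (ii) to Picard, and then used in Section 3 via the maps $\Gcal_\pm$. So you are attempting strictly more than the paper does. Your interior arguments are essentially the standard ones and are sound: the Lax--Milgram/weak-Neumann projection for \eqref{eq_Helmholtz_interior} is correct (modulo the standard fact, which you should cite, that on a Lipschitz domain $H_0(\rmop{div},U)$ coincides with the fields in $H(\rmop{div},U)$ with vanishing weak normal trace --- your orthogonality argument only delivers the latter), and the exterior half of (i) goes through once the Hardy inequality $\|(1+|x|)^{-1}\phi\|_{L^2(\GO^+)}\leq C\|\nabla \phi\|_{L^2(\GO^+)}$ is in hand, noting that constants are not in $H^1_{-1}(\GO^+)$ in three dimensions, so no quotient is needed. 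Likewise the bounded case of (ii) via harmonic $1$-forms with the absolute boundary condition $\iota^*(*\omega)=0$ is the right identification.

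There are, however, two genuine gaps. The main one is the exterior case of (ii), which you correctly flag as the substantive work but do not carry out --- and your first proposed route would fail as stated. Kelvin inversion $x\mapsto x/|x|^2$ is conformal, and while pullback preserves $d\omega=0$, it does not preserve $d^*\omega=0$: the Hodge star on $1$-forms in $\Rbb^3$ scales by the conformal factor (since $n-2k=1\neq 0$), and any reweighting $\omega\mapsto\lambda^a\omega$ that restores co-closedness destroys closedness. Kelvin transforms conjugate harmonic \emph{potentials}, not harmonic \emph{fields}; this non-invariance is precisely why Picard and Osterbrink--Pauly develop the exterior theory directly in weighted spaces. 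Your truncation route is viable in principle, but it requires real work you have not done: establishing the $O(|x|^{-2})$ decay of elements of $\Hscr_\mathrm{t}(\GO^+)$ from the $L^2$ condition and the spherical-harmonics expansion, proving injectivity of the period map by an energy identity whose boundary term at infinity is killed by that decay, and proving surjectivity by exhibiting for each class in $H^1_{\mathrm{dR}}(\GO^+)$ a closed representative in $L^2(\GO^+,\Rbb^3)$ to which \eqref{eq_Helmholtz_exterior} can be applied --- in other words, essentially reproving Picard's exterior result. The secondary gap is your claim that elliptic regularity upgrades $\omega\in\Hscr_\mathrm{t}(\GO^-)$ to a form smooth up to the boundary: on a Lipschitz (even $C^1$) boundary this is false, and the paper's own remark after Lemma \ref{lemm_regularity} records that the inclusion $H(\rmop{rot},\Omega)\cap H_0(\rmop{div},\Omega)\subset H^{1/2}(\Omega,\Rbb^3)$ is optimal in that setting. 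Interior smoothness suffices for nothing here; to run the Hodge--Morrey--Friedrichs identification on a Lipschitz domain you must instead invoke the nonsmooth Hodge theory of \cite{GMM11, MMT01}, which is available but should replace the boundary-regularity step.
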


Note that \eqref{eq_Helmholtz_interior} and \eqref{eq_Helmholtz_exterior} are decompositions of Helmholtz type since rotation-free fields are decomposed into gradient fields and rotation-free, divergence-free fields.

We define the operators $\Gcal_\pm$ as follows:
\begin{align}
    &\Gcal_-: \overline{f} \in \Hdrf^-/\Mpm^- \mapsto \overline{v_-^f} \in H(\rmop{rot}0, \Omega^-)/\nabla H^1 (\Omega^-),  \label{eq_isomorphism_interior}\\
    &\Gcal_+: \overline{f} \in \Hdrf^+/\Mpm^+ \mapsto \overline{v_+^f} \in H(\rmop{rot}0, \GO^+)/\nabla H^1_{-1} (\GO^+). \label{eq_isomorphism_exterior}
\end{align}
Here $\overline{f}$ and $\overline{v_\pm^f}$ denote equivalence classes in the corresponding quotient spaces. Note that $\Gcal_\pm$ are well-defined. In fact, if $f \in \Hdrf^+$, then $v_+^f \in H(\rmop{rot}0, \GO^+)$. If $\overline{f}=\overline{g}$, then $f-g \in \Mpm^+$, i.e., there is a harmonic function $u$ in $\GO^+$ belonging to $H^2_{-1} (\GO^+)$ such that $f-g= \nabla u|_+$ on $\p\GO$. Thus $v_+^f-v_+^g= \nabla u$ in $\GO^+$, and hence $\overline{v_\pm^f}= \overline{v_\pm^g}$. Thus $\Gcal_+$ is well-defined. One can see that $\Gcal_-$ is well-defined similarly.

We need the following lemma to prove Theorem \ref{theo_betti_from_below}.
\begin{lemm}\label{lemm_regularity}
    If $\Omega\subset \Rbb^3$ is a bounded $C^{1, \alpha}$-domain for some $\alpha>1/2$, then 
    \begin{equation}\label{H1incl}
        H(\rmop{rot}, U)\cap H_0(\rmop{div}, U) \subset H^1 (U, \Rbb^3)
    \end{equation}
    for $U=\GO^\pm$.
\end{lemm}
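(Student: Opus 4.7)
This is a classical Gaffney--Friedrichs type regularity result: square-integrable vector fields with $L^2$ divergence, $L^2$ curl, and vanishing normal trace are $H^1$ on sufficiently regular domains. My plan is to reduce the vector-valued statement to scalar elliptic regularity through a Helmholtz-type splitting, and to handle the exterior case by a cutoff reduction to the interior argument.

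Step 1 (Helmholtz splitting). Given $u \in H(\rmop{rot}, U) \cap H_0(\rmop{div}, U)$, solve the scalar Neumann problem
\[
\lap \phi = \nabla \cdot u \ \text{ in } U, \qquad \p_\nu \phi = 0 \ \text{ on } \p U
\]
(with natural decay at infinity when $U = \GO^+$). The compatibility condition $\int_U \nabla\cdot u = \int_{\p U} \nv \cdot u = 0$ is ensured by the tangential trace hypothesis $u \in H_0(\rmop{div}, U)$. Elliptic regularity on a $C^{1,\alpha}$-domain yields $\phi \in H^2(U)$ (respectively $\phi \in H^2_{-1}(\GO^+)$); the assumption $\alpha > 1/2$ enters here because the $C^{0,\alpha}$-regular normal vector must act as a bounded multiplier on the $H^{1/2}$ boundary data spaces appearing in the solution theory. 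Setting $w := u - \nabla \phi \in L^2 (U, \Rbb^3)$, we have $\nabla \cdot w = 0$, $\nv \cdot w = 0$ on $\p U$, and $\nabla \times w = \nabla \times u \in L^2 (U, \Rbb^3)$, so the problem is reduced to showing $w \in H^1 (U, \Rbb^3)$.

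Step 2 (Biot--Savart representation and boundary regularity). Extend $w$ by zero to $\tilde w \in L^2 (\Rbb^3, \Rbb^3)$; the tangential condition $\nv \cdot w = 0$ makes $\nabla \cdot \tilde w = 0$ in the sense of distributions on $\Rbb^3$. Define the vector potential $A := - \GG \ast (\nabla \times \tilde w)$, so that $\tilde w = \nabla \times A$ and $\nabla \cdot A = 0$. Interior elliptic regularity immediately gives $A \in H^2_{\mathrm{loc}}(U)$, hence $w \in H^1_{\mathrm{loc}}(U)$. For the boundary regularity, localize with a cutoff supported in a boundary coordinate chart and flatten the boundary by a $C^{1, \alpha}$-diffeomorphism; the straightened problem becomes a scalar Poisson equation with oblique boundary conditions whose coefficients are $C^{0,\alpha}$, which is solvable in $H^2$ thanks to the multiplier threshold $\alpha > 1/2$. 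A partition-of-unity argument then concludes $w \in H^1(U, \Rbb^3)$. The exterior case $U = \GO^+$ is handled analogously: a smooth cutoff supported in a neighborhood of $\p \GO$ reduces the boundary analysis to the interior argument on a bounded Lipschitz region, while the complement is handled by interior elliptic regularity for the system $(\nabla \cdot, \nabla \times)$ combined with the weighted $L^2$-decay at infinity.

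The main obstacle is the $H^2$ elliptic regularity under the merely $C^{1, \alpha}$ assumption, since the usual textbook proofs of Neumann regularity assume $C^{1,1}$ boundary. What saves the argument is precisely that $\alpha > 1/2$ is the threshold at which $C^{0, \alpha}$-functions pointwise multiply $H^{1/2}(\p \GO)$ into itself, a property already invoked in the preceding section via \cite[Lemma 4.2]{FJKc1a}; this allows the classical Agmon--Douglis--Nirenberg type estimates to be adapted to the present regularity class.
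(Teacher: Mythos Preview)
Your approach is quite different from the paper's, which does not attempt a self-contained argument at all: for $U=\GO^-$ the paper simply cites \cite{Filonov97}, and for $U=\GO^+$ it uses a smooth cutoff $\chi$ to split $u=\chi u+(1-\chi)u$, applying the interior result on the bounded annulus $B_{4R}\setminus\overline{\GO}$ and the $C^2$ exterior result of \cite{Kuhn-Pauly10} on $\Rbb^3\setminus\overline{B_R}$. The entire content of the $C^{1,\alpha}$, $\alpha>1/2$ threshold is thus outsourced to Filonov's theorem.

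Your sketch, by contrast, tries to reprove that theorem, and this is where there is a genuine gap. In Step~1 you invoke $H^2$ regularity for the Neumann problem $\lap\phi=\nabla\cdot u$, $\p_\nu\phi=0$ on a $C^{1,\alpha}$ domain. But observe that this Neumann regularity is \emph{equivalent} to the special case of the lemma in which $u$ is curl-free: if $\phi\in H^1$ with $\lap\phi\in L^2$ and $\p_\nu\phi=0$, then $\nabla\phi\in H(\rmop{rot}0,U)\cap H_0(\rmop{div},U)$, and $\phi\in H^2$ is exactly $\nabla\phi\in H^1$. So Step~1 assumes the lemma for curl-free fields in order to reduce to divergence-free fields, and the argument is circular unless you supply an independent proof of the Neumann $H^2$ estimate. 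The appeal to the $H^{1/2}$ multiplier property of $C^{0,\alpha}$ functions does not do this: that property controls boundary operators such as $\NP^*$ and the map $\Gvf\mapsto\Gvf\nv$, but it does not by itself give interior $H^2$ bounds for the Neumann problem below $C^{1,1}$ regularity; the latter is precisely the nontrivial analytic input that \cite{Filonov97} supplies (and, as the Discussion section notes, is specific to three dimensions). A secondary issue: in Step~2, the distributional curl of the zero extension $\tilde w$ carries a surface contribution $\nv\times w$ on $\p U$ in addition to $(\nabla\times w)\chi_U$, so $A=-\GG*(\nabla\times\tilde w)$ is a sum of a volume potential and a single-layer-type term whose $H^2$ boundary behavior again hinges on exactly the regularity question you are trying to settle.
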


\begin{proof}
    The case when $U=\Omega^-$ is proved in \cite{Filonov97}. That when $U=\Omega^+$ is proved in \cite{Kuhn-Pauly10} under the assumption that $\partial\Omega$ is $C^2$. When $U=\Omega^+$ and $\partial\Omega$ is $C^{1, \alpha}$ for some $\alpha>1/2$, we consider a $C^\infty$-smooth cutoff function
    \[
        \chi (x)=
        \begin{cases}
            1 & \text{if } x\in B_{2R}, \\
            0 & \text{if } x\in \Rbb^3\setminus B_{3R}
        \end{cases}
    \]
    where $B_r$ is the open ball in $\Rbb^3$ centered at $0$ with radius $r>0$, and we fix $R>0$ such that $\overline{\Omega}\subset B_R$. Then we decompose a vector field $u\in H(\rmop{rot}, \Omega^+)\cap H_0 (\rmop{div}, \Omega^+)$ into $u=\chi u+(1-\chi)u$. Since
    \[
        \chi u\in H(\rmop{rot}, B_{4R}\setminus \overline{\Omega})\cap H_0(\rmop{div}, B_{4R}\setminus \overline{\Omega})
        \subset H^1 (B_{4R}\setminus \overline{\Omega})
    \]
    and
    \[
        (1-\chi)u\in H (\rmop{rot}, \Rbb^3\setminus \overline{B_R})\cap H_0 (\rmop{div}, \Rbb^3\setminus \overline{B_R})\subset H^1(\Rbb^3\setminus \overline{B_R}),
    \]
    we obtain $u\in H^1 (\Omega^+)$.
\end{proof}

\begin{rema*}
    For general bounded Lipschitz domains, it is known that the space $H(\rmop{rot}, \Omega^\pm)\cap H_0 (\rmop{div}, \Omega^\pm)$ is included in the Sobolev space $H^{1/2}(\Omega^\pm, \Rbb^3)$ \cite{Costabel90, MMT01}. This Sobolev regularity is optimal in the sense that there is a bounded $C^1$-domain $\Omega$ and a vector field $u\in H(\rmop{rot}, \Omega)\cap H_0 (\rmop{div}, \Omega)$ such that $u\not\in H^{1/2+\varepsilon}(\Omega, \Rbb^3)$ for any $\varepsilon>0$ \cite[Proposition 1.1]{Costabel19}. 
\end{rema*}

\begin{proof}[Proof of Theorem \ref{theo_betti_from_below}]
We see from \eqref{eq_decomp_drf} and \eqref{eq_decomp_drf2} that
\[
        \dim (\Hdf/\Hdf^\Dcal)
        =\dim (\Hdrf^-/\Mpm^-)+\dim (\Hdrf^+/\Mpm^+).
\]
Since $b_1(\GO^-) + b_1(\GO^+)=b_1(\partial\Omega)$ (see \cite[Proof of Theorem 1.2]{FJKc1a} for a proof of this identity), it suffices to prove \eqref{eq_betti_from_below1}. We prove it by proving bijectivity of $\Gcal_\pm$ defined in \eqref{eq_isomorphism_interior} and \eqref{eq_isomorphism_exterior}. We only deal with the case of $\Gcal_+$ since the case for $\Gcal_-$ can be dealt with similarly.

Suppose that $f \in \Hdrf^+$ and $\Gcal_+[\overline{f}]=\overline{v_+^f}=0$. Then, $v_+^f=\nabla u$ in $\GO^+$ for some $u\in H^1_{-1}(\GO^+)$. Since $v_+^f \in H^1_{-1}(\GO^+)$, $u$ actually belongs to $H^2_{-1}(\GO^+)$.
Since $f\in \Hdrf^+$, $u$ is harmonic in $\GO^+$. Hence we have $f\in \Mpm^+$. This implies $\overline{f}=0$.

To prove that $\Gcal_+$ is surjective, suppose $u \in H (\rmop{rot}0, \GO^+)$. According to \eqref{eq_Helmholtz_exterior}, there is a unique $v \in \Hscr_\mathrm{t} (\GO^+)$ such that $\overline{u}=\overline{v}$. The Green theorem shows that $\jbk{v, \lap w}_{L^2 (\GO^+)}=0$ for all $w\in C_c^\infty (\GO^+)$. Thus $\lap v=0$ on $\GO^+$ in the classical sense. Moreover, $v \in H^1(\GO^+)$ by Lemma \ref{lemm_regularity}.  Thus $f:=v|_+$ belongs to $H^{1/2}(\p\GO)$ by trace theorem, and $v_+^f=v$ by uniqueness of the solution to the problem \eqref{eq_bvp_exterior}. Thus $\Gcal_+[\overline{f}]=\overline{u}$. So $\Gcal_+$ is surjective.
\end{proof}

\section*{Discussion}

The decompositions \eqref{eq_vf_decomposition} and \eqref{eq_vf_potential} show that if $f \in H^{1/2}(\p\GO, \Rbb^3)$, then there are $f_- \in \Hdrf^-$ and $f_+ \in \Hdrf^+$,  $f_0 \in \Ran \left( \NP^\divf + 1/2I \right)\left( \NP^\divf - 1/2I \right)$,  harmonic functions $h_- \in H^2(\GO^-)$ and $h_+ \in H^2_{-1}(\GO^+)$, and $g_0 \in \Hdf$ such that
\[
f=f_- + f_+ + f_0= \nabla h_-|_- + \nabla h_+|_+ + g_0.
\]
If $\p\GO$ is simply connected, then
\[
f_- = \nabla h_-|_, \quad f_+ =\nabla h_+|_+ , \quad f_0= g_0.
\]
It is interesting to find a way to construct these functions for a given $f$.

Theorems \ref{1001} and \ref{theo_vf_potential} are valid for bounded domains $\GO$ in $\Rbb^d$ with $d >3$ provided that $\p\GO$ is $C^{1, 1}$-smooth. This can proved in the same way as the three-dimensional case with minor modification. We only mention differences of the proof from the three-dimensional case. On $\Rbb^d$ with $d > 3$, the equality $\nabla \times u=0$ means $\p u_k/\p x_j=\p u_j/\p x_k$ for all $j, k=1, \ldots, d$, and the fundamental solution $\GG(x)$ is given by $-((d-2)\Go_d)^{-1} |x|^{2-d}$ where $\Go_d$ is the area of the unit sphere in $\Rbb^d$. The $C^{1, 1}$-regularity condition of the boundary is needed for the higher-dimensional analogue of the inclusion relation \eqref{H1incl}, i.e., the $H^1$-regularity of $H(\rmop{rot}, \Omega^\pm)\cap H_0(\rmop{div}, \Omega^\pm)$. The result of \cite{Filonov97}, which is used for the proof of Lemma \ref{lemm_regularity}, holds only in the three-dimensional case. However, if $\p\GO$ is $C^{1, 1}$-smooth, then it is proved in \cite{Morrey56} that 
\[
H(\rmop{rot}, \Omega^-)\cap H_0(\rmop{div}, \Omega^-) \subset H^1 (\GO^-, \Rbb^d)
\]
even if $d >3$. For $\Omega^+$, since the result in \cite{Kuhn-Pauly10}, which we utilize in the proof of Lemma \ref{lemm_regularity}, is available in higher dimensional cases, the same argument as in Lemma \ref{lemm_regularity} proves the corresponding assertion. The case when the boundary is $C^2$-smooth, the above inclusion relation is an immediate consequence of the Friedrichs inequality, which is also called Gaffney inequality \cite{Friedrichs55, Gaffney51} (see also \cite[Theorem 3.1]{Kuhn-Pauly10} for anisotropic electromagnetic media case and \cite{Mitrea01} for convex domain cases which may contain singularities). It is unknown to the best of the authors' knowledge whether the boundary regularity can be weakened to $C^{1, \alpha}$ for some $\alpha>1/2$.



\end{document}